\newtheorem{theorem}{Theorem}
\newtheorem{assumption}[theorem]{Assumption}
\newtheorem{definition}[theorem]{Definition}
\newtheorem{lemma}[theorem]{Lemma}
\newtheorem{notation}[theorem]{Notation}
\newtheorem{remark}[theorem]{Remark}
\newenvironment{proof}[1][Proof]{\textbf{#1.} }{\ \hspace*{\fill} \rule{0.5em}{0.5em}}
\newcommand{\trace}{\text{tr}}
\newcommand{\bi}{\begin{itemize}}
\newcommand{\ei}{\end{itemize}}
\newcommand{\bd}{\begin{displaymath}}
\newcommand{\ed}{\end{displaymath}}
\newcommand{\be}{\begin{eqnarray*}}
\newcommand{\ee}{\end{eqnarray*}}
\newcommand{\highlight}[1]{\ifthenelse{\boolean{showcomments}}
{ \textcolor{red}{#1}}{}}
\begin{document}
\title{
Mean Square Stability Analysis of Stochastic Continuous-time Linear Networked  Systems}

\author{ Sai Pushpak, Amit Diwadkar, and Umesh Vaidya
\thanks{Financial support from the National Science Foundation grant CNS-1329915 and ECCS-1150405 is gratefully acknowledged. Sai Pushpak and Umesh Vaidya are with the Department of Electrical \& Computer Engineering,
Iowa State University, Ames, IA 50011. Emails: {\tt\small \{pushpak, ugvaidya\}@iastate.edu, amit.diwadkar@gmail.com}}
}


\maketitle


\begin{abstract}
In this technical note, we study the mean square stability-based analysis of stochastic continuous-time linear networked systems. The stochastic uncertainty is assumed to enter multiplicatively in system dynamics through input and output channels of the plant. Necessary and sufficient conditions for mean square exponential stability are expressed in terms of the input-output property of deterministic or nominal system dynamics captured by the {\it mean square} system norm and variance of channel uncertainty. The stability results can also be interpreted as a small gain theorem for continuous-time stochastic systems.  Linear Matrix Inequalities (LMI)-based optimization formulation is provided for the computation of mean square system norm for stability analysis. For a special case of single input channel uncertainty, we also prove a fundamental limitation result that arises in the mean square exponential stabilization of the continuous-time linear system. Overall, the contributions in this work generalize the existing results on stability analysis from discrete-time linear systems to continuous-time linear systems with multiplicative uncertainty. Simulation results are presented for WSCC $9$ bus power system to demonstrate the application of the developed framework.
\end{abstract}

\IEEEpeerreviewmaketitle
\section{Introduction}
The problem of stability analysis and control synthesis of systems in the presence of uncertainty has a rich, long history of literature. The literature in this area can be broadly divided into two parts. One part deals with the norm bounds on uncertainty, and the other part considers the uncertainty to be a stochastic random variable. Classical robust control addresses this problem when uncertainty is norm bounded \cite{Dullerud_Paganini,Skogestad_book}. In this technical note, we study the robust control problem for continuous-time linear dynamics, where the uncertainty is modeled as a stochastic random variable. The stochastic uncertainty is assumed parametric and hence enters multiplicatively in the system dynamics. The analysis and control problem with stochastic multiplicative uncertainty has received renewed attention lately as a model for network controlled system with communication uncertainty.

Some of the classical results involving stochastic stability analysis and control problems are presented in \cite{Hasminskii_book}. The work by Wonham \cite{wonham1967optimal} is one of the earliest literature on this topic involving continuous-time dynamics with multiplicative measurement and control noise.
In \cite{willems1971frequency}, frequency domain-based stability criteria for continuous-time LTI system with state-dependent noise is derived. The authors in \cite{mclane1971optimal} study the LQR problem for continuous-time linear systems with state-dependent noise entering only in the state dynamics.
In \cite{willems1976feedback}, mean square exponential stability analysis and static state feedback control design for stochastic systems with state-dependent control noise are studied.  The same authors, using state feedback control, developed robust stabilization results for continuous and discrete-time uncertain LTI systems in \cite{willems1983robust}. 
 
 In \cite{Bouhtouri_stability_radii} and  \cite{Bouhtouri_stability_radii2}, the authors, using state feedback propose an input-output operator approach for characterizing the stability radii and maximizing the stability radii. In \cite{stochastic_berstein}, the author provides a comparison of necessary and sufficient conditions with dynamic and static output feedback controller involving stochastic multiplicative uncertainty, and deterministic norm bounded uncertainty respectively. Bernstein \cite{stochastic_berstein} also provides a comprehensive survey of literature on this topic of stochastic stability analysis and control.
In \cite{state_feedback_multiplicative}, a linear matrix inequality (LMI)-based mean square exponential stability result using static state feedback control is given for continuous-time LTI systems with state-dependent noise. Using input/output operator approach, a small gain theorem for stochastic systems with state-dependent noise only affecting the state dynamics has been developed in \cite{Dragan_small}. In contrast to these references, we develop mean square exponential stability analysis and synthesis results with stochastic multiplicative uncertainty, both at the input and output side of the plant. The problem formulation is general enough to address problems involving not only input-output channel uncertainty but also parametric stochastic uncertainty.

Research activities in the area of network controlled system have lead to the renewed interest in the analysis and design of systems with multiplicative uncertainty \cite{networksystems_foundation_sastry}. In particular, network systems with erasure or time-delay uncertainty in the input or output communication channel can be modeled as a system with multiplicative uncertainty. Issues related to fundamental limitations for stabilization and estimation of networked systems, i.e., largest tolerable  channel uncertainty are addressed in \cite{scl04,networksystems_foundation_sastry,Tatikonda_noisy_comm,martins_elia,tac11,gupta}. Fundamental limitation results are extended to nonlinear systems in \cite{diwadkar2013limitations,vaidya2010limitations}. Similarly, the problem of fundamental limitations in linear and nonlinear consensus networks with stochastic interactions among network components are addressed in \cite{diwadkar2011robust,vaidya2012limitation, diwadkar2014stochastic, Amit_nature,nicola_networkcontrol,diwadkar2014stabilization}. There is also extensive literature on stability analysis of systems involving nonlinear dynamics with multiplicative stochastic uncertainty \cite{stochastic_kristic2,lure_stab_journal}. A small gain theorem for MIMO linear systems with multiplicative noise in the mean square sense is given in \cite{small_gain_discretetime}. In \cite{Bassam_structured_unc}, the author considers the discrete-time system with correlated stochastic uncertainties and develops necessary, sufficient conditions for mean square exponential stability expressed in terms of the spectral radius of input-output linear matrix operator.   
However, all the above results are developed for discrete-time network dynamical systems. The results in this note are inspired from \cite{scl04} and can be viewed as a continuous-time counterpart of the discrete-time results developed in \cite{scl04}. Following \cite{scl04}, we provide a robust control-based framework for the analysis and synthesis of continuous-time linear networked systems with stochastic channel uncertainties. 

The main contributions of this technical note can be stated as follows. We provide a necessary, sufficient condition for mean square exponential stability of the continuous-time linear networked system with input and output channel uncertainties. The necessary, sufficient conditions for mean square exponential stability are stated in the form of a spectral radius involving mean square system norm. We show the mean square system norm introduced in \cite{scl04} for discrete-time system generalizes to the continuous-time setting. LMI-based optimization formulation is proposed for the computation of the mean square system norm. Furthermore, fundamental limitation result for mean square exponential stabilization expressed in terms of the unstable eigenvalues of the open-loop system is derived.
One of the main differences between the discrete-time problem set-up discussed in \cite{scl04} and continuous-time problem set-up is that we assume the plant dynamics to be strictly proper. The assumption is necessary to avoid two white noise processes from multiplying each other when the signal traverse in the feedback loop. Furthermore, by adopting density-based deterministic approach involving Fokker-Planck equation \cite{Lasota_book}, we avoid the technical challenges associated in dealing with stochastic calculus of stochastic differential equations.

The mean square stability analysis framework developed for continuous-time stochastic network dynamical system  is applicable in variety of settings. Some applications include consensus problems with communication channel uncertainty \cite{Bassam_Axelby_paper,Wang_TSP_2013}, distributed optimization over stochastic network \cite{Sai_ICC_optm,Wang_ACC_2016,Wang_IFAC_2014,Wang_ACC_2012}, multi-agent systems with communication uncertainties \cite{Tao_multi_agent_comm_noise,kim2011output}, and power system with communication channel uncertainty or renewable uncertainties \cite{Sai_ACC_wac,Sai_ACC_fragility}.

\section{Preliminaries and Definitions}
\label{prelims}
This section consists of preliminaries and definitions behind the density function based approach for the analysis of stochastic differential equations (SDE's). 
Consider the following linear SDE with stochastic multiplicative uncertainty,
\begin{align}
\dot{x} = & A x + {\sum_{\ell=1}^m} \sigma_{\ell} B_{\ell} x \xi_{\ell}
\label{stochastic_sys}
\end{align}
where $x \in \mathbb{R}^n$, for ${\ell} = 1, \dots, m$, $\xi_{\ell} = \frac{d\Delta_{\ell}}{dt}$ with $\Delta_1, \dots, \Delta_m$ being the standard independent Wiener process (Brownian motion) and  $\sigma_{\ell} > 0$ for ${\ell} = 1, \dots, m$. The standard independent Wiener processes, $\Delta_i(t)$ for every ${\ell} = 1, \dots, m$ satisfy, 
\begin{enumerate}
\item[(i)] $\text{Prob} \{\Delta_{\ell}(0) = 0\} = 1$ 
\item[(ii)] $\{\Delta_{\ell}(t) \}$ is a process with independent increments 
\item[(iii)] $\{\Delta_{\ell}(t) - \Delta_{\ell}(s)\}$ has a Gaussian distribution with $E[\Delta_{\ell}(t) - \Delta_{\ell}(s)] =0$ and $E[(\Delta_{\ell}(t) - \Delta_{\ell}(s))^2] = |t-s|$. 
\end{enumerate}
\begin{notation} 
In the following, $x(t)$ is used  to denote the solution of system (\ref{stochastic_sys}) defined in the sense of It\^o  and notation $x$ is used to describe the states. We refer the readers to \cite[Theorem 11.5.1]{Lasota_book} for technical assumptions leading to existence and uniqueness of solution to SDE (\ref{stochastic_sys}). It is important to emphasize that these assumptions are satisfied by  (\ref{stochastic_sys}). 
\end{notation}
Next, we state the following stability definition for system \eqref{stochastic_sys}. 
\begin{definition}\label{def_mse}[Mean Square Exponentially Stable (MSES)]
System (\ref{stochastic_sys}) is mean square exponentially stable, if there exists positive constants $\beta_1$ and $\beta_2$, such that, 
$$E[ x\textcolor{black}{(t)}^\top x\textcolor{black}{(t)} ]\leq \beta_1 \exp({-\beta_2 t})E[{x(0)}^\top x(0)], \;\; \forall\; x(0)\in \mathbb{R}^n.$$
\end{definition}
We now consider the following SDE with multiplicative as well as additive stochastic uncertainty,  
\begin{align}
\dot{x} = & A x +\textstyle \sum_{{\ell}=1}^m \sigma_{\ell} B_{\ell} x \xi_{\ell} + H \eta
\label{system_additive}
\end{align}
where $x \in \mathbb{R}^n$, $H \in \mathbb{R}^n$, for ${\ell} = 1, \dots, m$, $\xi_{\ell} = \frac{d\Delta_{\ell}}{dt}$, $\eta = \frac{d \zeta}{dt}$ with $\Delta_1, \dots, \Delta_m, \zeta$ being the standard independent Wiener process (Brownian motion). It is assumed that the standard Wiener process,  $\zeta$ is uncorrelated with the processes, $\Delta_1,\ldots, \Delta_m$. We now define the notion of bounded moment stability for system (\ref{system_additive}).
\begin{definition}\label{def_smb}[Second Moment Bounded]
System (\ref{system_additive}) is said to be second moment bounded if there exists a positive constant $\beta$, such that,
$$\lim_{t\to \infty} E[x\textcolor{black}{(t)}^\top x\textcolor{black}{(t)}]\leq \beta, \;  \forall \ x(0)\in \mathbb{R}^n.$$
\end{definition}
The result establishing the relation between the mean square exponential stability and second moment boundedness is discussed in the later part of this section. 

Instead of analyzing the individual trajectories, $x(t)$, we adopt density-based approach as proposed in \cite{Lasota_book} towards the analysis of stochastic system (\ref{system_additive}). In particular, the density function $\psi(x,t)$ for the stochastic process $x(t)$ satisfies
$$\text{Prob}\{x(t) \in B\} =  \int_{B} \psi(z,t) dz$$ for any set $B\subset\mathbb{R}^n$. The density function, $\psi(x,t)$  is obtained as a solution of a linear partial differential equation, known as the Fokker-Planck (FP) equation, also called the Kolmogorov forward equation \cite[Theorem 11.6.1]{Lasota_book}. The FP equation is  defined as follows
\begin{align}
{\frac{\partial \psi}{\partial t}} = & {\frac{1}{2} \sum_{i,j = 1}^n \frac{\partial^2}{\partial x_i \partial x_j}\left( \sum_{{\ell}=1}^m \sigma_{\ell}^2 (b_{\ell}^ix)(b_{\ell}^jx) + h_i h_j\right) \psi} { - \sum_{i=1}^n \frac{\partial} {\partial x_i} (a_i x) \psi, \;\; t > 0, \; x \in \mathbb{R}^n,}
\label{fp_eq}
\end{align}
where $a_i,b_{\ell}^i $ are the $i^{th}$ rows of $A, B_{\ell}$ respectively and $h_i$ is the $i^{th}$ entry of $H$ in (\ref{system_additive}). 
\begin{remark}
The coefficients,  $\sum_{{\ell}=1}^m \sigma_{\ell}^2 (b_{\ell}^i x) (b_{\ell}^j x) + h_i h_j$, $a_i x$ satisfy the uniform parabolicity condition and hence they are regular \cite[Definition 11.7.2]{Lasota_book}.  Based on these properties of coefficients, the solution of FP equation satisfies following bounds \cite[Theorem 11.7.1]{Lasota_book}, $$\vert \psi \vert, \vert\frac{\partial \psi}{\partial t}\vert, \vert \frac{\partial \psi}{\partial x_i} \vert, \vert\frac{\partial^2 \psi}{\partial x_i \partial x_j}\vert \leq \bar K t^{-(n+2)/2} \exp(-\frac{1}{2}\bar \delta \vert x \vert^2/t),$$ where $\bar K$ and $\bar \delta$ are positive constants and are function of bounds that appear in the uniform parabolicity condition and bounds on the initial density function, $\psi(x,0)$.
\label{remark_fp_eq}
\end{remark}
These bounds on $\psi$ and its derivatives allow us to multiply the FP equation  \eqref{fp_eq} with any increasing function that increases more slowly than $\exp(-\frac{1}{2}\vert x \vert^2)$. The resultant function is decreasing, and we can integrate term by term to compute moments of $\psi(x,t)$. It is known, for the case of a linear system driven by additive white noise process, if the initial density function, $\psi(x,0)$, is Gaussian, then $\psi(x,t)$ remains Gaussian for all future time $t$. Hence, for linear systems with additive white noise forcing, the infinite dimensional FP equation can be replaced with the finite dimensional equation for the evolution of the mean and covariance. In the following lemma, we show the covariance evolution for the system (\ref{system_additive}) with multiplicative noise is closed and does not depend upon higher order moments. 

\begin{lemma} Let the covariance matrix, $\bar Q(t)=E[x(t)x(t)^\top|\psi]:=\int_{\mathbb{R}^n}xx^\top \psi(x,t)dx,$ and $\bar{Q}(0): = \bar{Q}_0 < \infty,$
then $\bar Q(t)$  satisfies the following matrix differential equation (MDE) for system \eqref{system_additive} 
\begin{align}
\dot {\bar Q}=\bar QA^\top+A\bar Q+\textstyle \sum_{{\ell}=1}^m \sigma_{\ell}^2 B_{\ell} \bar QB_{\ell}^\top + H H^\top.
\label{cov_mde_add}
\end{align}
\label{lemma_cov}
\end{lemma}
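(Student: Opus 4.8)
The plan is to work entrywise and exploit the fact that the Fokker--Planck equation \eqref{fp_eq} expresses $\partial \psi/\partial t$ as a sum of spatial derivatives, so that the second moments of $\psi$ propagate by integration by parts. First I would write the $(p,q)$ entry of the covariance as $\bar Q_{pq}(t)=\int_{\mathbb{R}^n} x_p x_q\,\psi(x,t)\,dx$ and differentiate under the integral sign to obtain $\dot{\bar Q}_{pq}=\int_{\mathbb{R}^n} x_p x_q\,(\partial\psi/\partial t)\,dx$. Substituting \eqref{fp_eq} splits this integral into a diffusion contribution (the double-divergence part) and a drift contribution (the single-divergence part), each of which I would evaluate separately.

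For the diffusion contribution I would integrate by parts twice, transferring both derivatives from the coefficient-times-density onto the test polynomial $x_p x_q$. Because $\partial^2(x_p x_q)/\partial x_i\partial x_j=\delta_{ip}\delta_{jq}+\delta_{iq}\delta_{jp}$ is constant, and because the coefficient $\sum_{\ell}\sigma_\ell^2(b_\ell^i x)(b_\ell^j x)+h_i h_j$ is symmetric in $(i,j)$, the double sum collapses to $\int_{\mathbb{R}^n}\bigl(\sum_{\ell}\sigma_\ell^2(b_\ell^p x)(b_\ell^q x)+h_p h_q\bigr)\psi\,dx$. Recognizing $(b_\ell^p x)(b_\ell^q x)=b_\ell^p\,xx^\top (b_\ell^q)^\top$ together with $\int_{\mathbb{R}^n} xx^\top\psi\,dx=\bar Q$ identifies this with $\bigl(\sum_{\ell}\sigma_\ell^2 B_\ell\bar Q B_\ell^\top\bigr)_{pq}+(HH^\top)_{pq}$.

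For the drift contribution I would integrate by parts once; since $\partial(x_p x_q)/\partial x_i=\delta_{ip}x_q+\delta_{iq}x_p$, the term becomes $\int_{\mathbb{R}^n}\bigl((a_p x)x_q+(a_q x)x_p\bigr)\psi\,dx$, which by the same identification equals $(A\bar Q)_{pq}+(\bar Q A^\top)_{pq}$. Collecting the two contributions reproduces \eqref{cov_mde_add} entrywise, hence as a matrix identity, and finiteness of $\bar Q(t)$ for $t>0$ follows from the same decay estimates used below.

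The step I expect to be the main obstacle is purely analytic rather than algebraic: justifying the differentiation under the integral and, above all, the vanishing of every boundary term generated by the two integrations by parts. This is precisely where Remark \ref{remark_fp_eq} is used --- the Gaussian-type bounds $|\psi|,|\partial\psi/\partial x_i|,|\partial^2\psi/\partial x_i\partial x_j|\le \bar K\, t^{-(n+2)/2}\exp(-\tfrac12\bar\delta|x|^2/t)$ dominate the polynomial factors $x_p x_q$ and their first derivatives, so the products decay like a Gaussian as $|x|\to\infty$ and all surface integrals drop out, while simultaneously licensing the term-by-term integration. Once these decay estimates are invoked, the remaining manipulations are the routine index bookkeeping sketched above.
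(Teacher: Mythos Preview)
Your proposal is correct and rests on the same mechanism as the paper's proof---substitute the Fokker--Planck equation \eqref{fp_eq} into the time derivative of a second moment, integrate by parts, and invoke the Gaussian decay bounds of Remark~\ref{remark_fp_eq} to kill the boundary terms. The only difference is in packaging: the paper tests against an \emph{arbitrary} quadratic $V(x)=x^\top P x$ with $P>0$, recognizes the resulting integral as $E[\mathcal{L}V\mid\psi]$ for the generator $\mathcal{L}$, rewrites everything as $\trace(\dot{\bar Q}P)=\trace((\cdots)P)$, and then concludes the matrix identity from the fact that this holds for every $P>0$. Your entrywise computation with the test polynomial $x_p x_q$ is the same integration by parts carried out component by component, and it reaches \eqref{cov_mde_add} directly without the final ``for all $P>0$'' step. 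Each has its merits: the paper's version makes contact with the standard $\frac{d}{dt}E[V]=E[\mathcal{L}V]$ formalism (and can cite \cite[Theorem 11.9.1]{Lasota_book} for the integration by parts), while yours is slightly more elementary and avoids the inner-product argument at the end.
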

%
\begin{proof}
Consider the quadratic function, $V(x) = x^{\top} P x$, for any $P = P^{\top} > 0$ that is increasing. 
Then, $$E[V | \psi] := \int_{\mathbb{R}^n} V(x) \psi(x,t) dx.$$ Taking the time derivative on both sides and after simplification, we obtain \cite[Theorem 11.9.1]{Lasota_book}
\begin{align}
\frac{d E[V|\psi]}{dt}  = & \int_{\mathbb{R}^n} \left\{\frac{1}{2} \sum_{i,j = 1}^n \Bigg[\sum_{{\ell}=1}^m \sigma_{\ell}^2 (b_{\ell}^i x) (b_{\ell}^j x) + h_i h_j\Bigg] \frac{\partial^2 V}{\partial x_i \partial x_j}  + \sum_{i=1}^n (a_i x) \frac{\partial V}{\partial x_i}\right\} \psi(x,t) dx =E[\mathcal{L} V|\psi].
\label{eq_deriv_oper}
\end{align}
In Eq. \eqref{eq_deriv_oper}, the term $\mathcal{L}V$ is given by 
\begin{equation}
\mathcal{L}V =  x^{\top} \Big(A^{\top} P + P A + \sum_{{\ell}=1}^{m} \sigma_{\ell}^2 B_{\ell}^\top P B_{\ell}
\Big) x + H^{\top} P H.
\label{eq_diff_oper}
\end{equation} 
The time derivative of $E[V|\psi]$ is obtained by doing integration by parts where we make use of Remark \ref{remark_fp_eq}. In particular, we make use of the fact that the products, $\psi V, \frac{\partial \psi}{\partial x_i} V, \psi \frac{\partial V}{\partial x_i}$ vanish exponentially as $\vert x \vert \to \infty$ and hence, the higher order moments become zero. 
By substituting Eq. \eqref{eq_diff_oper} in Eq. \eqref{eq_deriv_oper}, and using the linearity of trace, expectation and commutativity inside trace, we obtain, 
$$ {d (\trace(E[xx^{\top}|\psi] P))}/{dt} = \trace\Big(\big(A^{\top} P + P A +\textstyle \sum_{{\ell}=1}^m \sigma_{\ell}^2 B_{\ell}^{\top} P B_{\ell} \big) E[x x^{\top}|\psi] + H H^{\top}P \Big).$$
By definition of expectation, $E[xx^{\top}|\psi] = \bar{Q}$, we have, 
$$ \trace(\dot{\bar{Q}} P) =  \trace\Big((\bar{Q}A^\top+A\bar{Q}+\textstyle \sum_{{\ell}=1}^m \sigma_{\ell}^2 B_{\ell} \bar{Q}B_{\ell}^\top + H H^{\top})P \Big). $$
This can be rewritten in terms of an inner product as
\[ \langle \dot{\bar{Q}} -(\bar{Q}A^\top+A\bar{Q}+\textstyle \sum_{{\ell}=1}^m \sigma_{\ell}^2 B_{\ell} \bar{Q} B_{\ell}^\top + H H^{\top}), P   \rangle = 0. \]
Since, $P > 0$, $\dot{\bar{Q}} = \bar{Q} A^\top + A\bar{Q} +\sum_{{\ell}=1}^m \sigma_{\ell}^2 B_{\ell} \bar{Q} B_{\ell}^\top + H H^{\top}. $ Furthermore, for $H = 0$, we obtain the covariance propagation equation for the system \eqref{stochastic_sys} without additive noise. 
\end{proof}

The ensuing result relates the stochastic stability of systems \eqref{stochastic_sys} and \eqref{system_additive}. 
\begin{lemma} The system (\ref{stochastic_sys}) is mean square exponentially stable if and only if system (\ref{system_additive}) is second moment bounded. 
\end{lemma}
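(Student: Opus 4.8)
The plan is to reduce both stability notions to a single spectral question about the linear operator that governs covariance propagation in Lemma \ref{lemma_cov}. Define the operator $\mathcal{L}$ on the space of symmetric matrices by $\mathcal{L}(Q) = AQ + QA^\top + \sum_{\ell=1}^m \sigma_\ell^2 B_\ell Q B_\ell^\top$. By Lemma \ref{lemma_cov}, the covariance of (\ref{stochastic_sys}) evolves as $\dot{\bar Q} = \mathcal{L}(\bar Q)$, while the covariance of (\ref{system_additive}) evolves as $\dot{\bar Q} = \mathcal{L}(\bar Q) + HH^\top$; the two dynamics share the same homogeneous part and differ only by the constant forcing $HH^\top$. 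Since $E[x(t)^\top x(t)] = \trace(\bar Q(t))$, mean square exponential stability is the statement that $\trace(e^{t\mathcal{L}}(\bar Q_0))$ decays exponentially for every admissible initial covariance $\bar Q_0$, and second moment boundedness is the statement that the forced trajectory has bounded trace as $t\to\infty$.

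For the direction mean square exponentially stable $\Rightarrow$ second moment bounded, I would first upgrade Definition \ref{def_mse} to an operator statement: taking rank-one initial covariances $\bar Q_0 = x(0)x(0)^\top$ and using linearity of $e^{t\mathcal{L}}$ together with the fact that every positive semidefinite matrix is a nonnegative combination of such rank-one matrices, the uniform exponential decay of $\trace(e^{t\mathcal{L}}(\cdot))$ on the cone yields $\|e^{t\mathcal{L}}\| \le \beta_1 e^{-\beta_2 t}$ in operator norm (here one uses $\|M\| \le \trace(M)$ for $M \ge 0$ and positivity of the semigroup to split a general symmetric argument into its positive and negative parts). Hence the spectral abscissa of $\mathcal{L}$ is negative and $\mathcal{L}$ is invertible. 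Variation of constants then gives $\bar Q(t) = e^{t\mathcal{L}}(\bar Q_0) + \int_0^t e^{(t-s)\mathcal{L}}(HH^\top)\,ds \to -\mathcal{L}^{-1}(HH^\top)$, a finite limit, so $\lim_{t\to\infty}\trace(\bar Q(t)) < \infty$ and (\ref{system_additive}) is second moment bounded.

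For the converse, second moment bounded $\Rightarrow$ mean square exponentially stable, I would exploit the positivity of the covariance semigroup: $e^{s\mathcal{L}}$ maps the cone of positive semidefinite matrices into itself, since it propagates a genuine covariance, so with $\bar Q_0 = 0$ the forced solution $\bar Q(t) = \int_0^t e^{s\mathcal{L}}(HH^\top)\,ds$ is monotonically nondecreasing in the semidefinite order. Boundedness of its trace forces convergence to some $\bar Q_\infty \ge 0$ satisfying $\mathcal{L}(\bar Q_\infty) = -HH^\top$. To conclude that $\mathcal{L}$ is Hurwitz I would appeal to Perron--Frobenius theory for the positive operator $\mathcal{L}$: its adjoint $\mathcal{L}^\ast(P) = A^\top P + PA + \sum_{\ell=1}^m \sigma_\ell^2 B_\ell^\top P B_\ell$, which is exactly the operator appearing in (\ref{eq_diff_oper}), shares the spectral abscissa $\lambda$ of $\mathcal{L}$ and admits a strictly positive dominant eigen-matrix $P^\ast > 0$. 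Pairing the forced solution against $P^\ast$ gives $\langle \bar Q(t), P^\ast\rangle = H^\top P^\ast H \int_0^t e^{\lambda s}\,ds$, which diverges whenever $\lambda \ge 0$, since $H^\top P^\ast H > 0$; boundedness therefore forces $\lambda < 0$, i.e.\ mean square exponential stability.

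The main obstacle is this last direction: boundedness of the response to the single, rank-one forcing $HH^\top$ must be leveraged into exponential decay of the entire homogeneous semigroup. The crux is that the covariance dynamics is a positive, order-preserving system, so that its growth bound coincides with its spectral abscissa and its adjoint possesses a strictly positive dominant eigen-matrix; this is precisely what makes $H^\top P^\ast H > 0$ and converts ``bounded forced trace'' into ``negative spectral abscissa.'' Establishing the strict positivity of $P^\ast$ (irreducibility of $\mathcal{L}$) and the coincidence of the growth and spectral bounds is the technical heart of the argument, whereas the mean square exponentially stable $\Rightarrow$ boundedness direction is, by comparison, a routine variation-of-constants computation.
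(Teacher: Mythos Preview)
Your forward direction matches the paper's: both reduce to ``covariance generator Hurwitz $\Rightarrow$ forced covariance converges.'' The reverse direction is where you diverge, and where a genuine gap appears. The paper vectorizes the covariance MDE via Kronecker products to get $\dot{\bar\vartheta} = \mathscr{A}\bar\vartheta + \mathscr{B}$ on $\mathbb{R}^{n^2}$ with $\mathscr{A} = A\oplus A + \sum_\ell \sigma_\ell^2(B_\ell \otimes B_\ell)$, writes the explicit solution, and argues that boundedness \emph{for every initial covariance} (the $\forall\,x(0)$ in Definition~\ref{def_smb}) forces $\mathscr{A}$ to be Hurwitz---no positivity or Perron--Frobenius machinery enters.

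You instead fix $\bar Q_0 = 0$ and try to conclude from boundedness at that single initial condition, which is why you need a strictly positive dominant adjoint eigenmatrix $P^\ast$. But irreducibility of $\mathcal{L}$ can fail: with all $B_\ell = 0$ one has $\mathcal{L}^\ast(P) = A^\top P + PA$, whose dominant eigenmatrix is the rank-one $vv^\top$ built from the top left eigenvector $v$ of $A$, so $H^\top P^\ast H = (v^\top H)^2$ can vanish for the fixed $H$ of the system and your pairing argument yields nothing. This is not a technicality to be filled in later; it is exactly the obstruction you flagged as ``the technical heart,'' and it is real. The remedy is what the paper uses and what you discarded: Definition~\ref{def_smb} is quantified over all $x(0)$, hence over all rank-one (and by linearity all positive semidefinite) $\bar Q_0$; since that cone spans the symmetric matrices, boundedness of the forced trajectory from every such $\bar Q_0$ already forces $e^{t\mathscr{A}}$ to be bounded, and the reverse direction becomes elementary. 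Your Perron--Frobenius detour is an artifact of dropping the $\forall\,x(0)$ clause too early.
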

\begin{proof}
Let $\phi: \mathbb{R}^{n \times n} \to \mathbb{R}^{n^2}$ be a bijective operator\cite[Chapter 2]{Costa_book} which converts a matrix into a column vector. Then, applying the operator, $\phi$ on both sides of MDE's,  Eq. \eqref{stochastic_sys} and Eq. \eqref{system_additive}, they can be written as linear vector differential equations. 
\begin{eqnarray}
\dot{\vartheta} = & \mathscr{A} \vartheta,
\label{cov_lde} \\
\dot{\bar \vartheta} = & \mathscr{A} \bar \vartheta + \mathscr{B},
\label{cov_lde_add}
\end{eqnarray}
where $\vartheta = \phi( Q), \bar \vartheta = \phi(\bar Q)$, $\mathscr{B} = (G \otimes G) \phi(I)\in \mathbb{R}^{n^2} \text{and}$ 
 $\mathscr{A} = A \oplus A + \sum_{k=1}^p \sigma_k^2 (B_k \otimes B_k)\in\mathbb{R}^{n^2\times n^2},$
where $I$ is the identity matrix of size $n \times n$ and $\otimes$ denotes the Kronecker product, $\oplus$ is the Kronecker sum.

\textit{Necessity}:
The mean square exponential stability of system \eqref{stochastic_sys} yields stability of system \eqref{cov_lde}, that is, $\mathscr{A}$ is Hurwitz. Since  $\mathscr{A}$ is Hurwitz, the steady state value of $\bar \vartheta$ is given by 
\[\lim_{t \to \infty} \bar \vartheta(t) =    \lim_{t \to \infty}  \phi(\bar Q(t)) =  -\mathscr{A}^{-1} \mathscr{B}.\]
Now, taking the inverse $ \phi$ operator, we obtain,
\[ \lim_{t \to \infty} E[x(t)x(t)^{\top}|\psi] =  - \phi^{-1}(\mathscr{A}^{-1} \mathscr{B}),\]
where $ \phi^{-1}(\mathscr{A}^{-1} \mathscr{B})$ is finite. Therefore, system \eqref{system_additive} is second moment bounded.

\textit{Sufficiency:} If system \eqref{system_additive} is second moment stable, then
$ \lim_{t \to \infty} \bar Q(t) $ is a finite value. 
Then, we have
\begin{align*}
\lim_{t \to \infty} \bar Q(t) = \lim_{t \to \infty} \phi^{-1}(\vartheta(t)) = \lim_{t \to \infty} \phi^{-1}(e^{\mathscr{A}t} \vartheta(0)+(e^{\mathscr{A}t}-I) \mathscr{A}^{-1} \mathscr{B}), 
\end{align*}
where $I$ is the identity matrix. 
The limit on the right-hand side is finite, if and only if $\mathscr A$ is Hurwitz, which implies system (\ref{cov_lde}) is stable and hence system (\ref{stochastic_sys}) is mean square exponentially stable.
\end{proof}
\section{Stochastic Uncertainty Modeling} 
\label{stochasticity_modeling}
In this section, we discuss how the stochastic uncertainty enters into the system dynamics. The problem set-up (as shown in Fig. \ref{fig_ip_op_unc2}) follows closely with the one used in \cite{scl04} for mean square exponential stability analysis of a discrete-time network. 
The dynamics of the plant are described by 
\begin{equation}
\small
\mathbb{P}:\left\{
\begin{array}{lcl}
\dot x_p &=&A_p x_p+B_p u_p\\
y_p&=&C_p x_p
\end{array}\right.,
\label{plant}
\end{equation}
where $x_p \in \mathbb{R}^n, u_p\in \mathbb{R}^d$, and $y_p\in \mathbb{R}^q$ are the plant state, input, and output, respectively. The  state space model for the plant is assumed to be stabilizable, detectable, and strictly proper.
Similarly, the controller dynamics are assumed to be strictly proper with the following state space model. 
\begin{equation}
\small
\mathbb{K}: \left\{
\begin{array}{lcl}
\dot x_k&=&A_k x_k + B_k y_k\\
u_k &=&C_k x_k
\end{array}\right.,
\label{controller}
\end{equation}
where $x_k\in \mathbb{R}^n, y_k\in \mathbb{R}^q$, and $u_k\in \mathbb{R}^d$.
The assumption on the controller dynamics being strictly proper is essential, since it allows us to study the case where the uncertainties enter at both input and output channels (refer subsection \ref{sec_proper_controller} for more explanation). If the uncertainty  enters only at the input or the output channel, then one can consider the controller dynamics which is not strictly proper \cite{Sai_cdc2015}. 
\begin{figure}[h]
 \begin{center}
\includegraphics[scale=1.3]{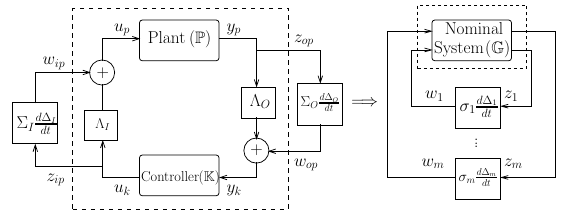}
\caption{a) MIMO plant and controller with stochastic uncertainty in the input and output channels b) MIMO nominal system with stochastic uncertainty in the feedback}
\label{fig_ip_op_unc2}
 \end{center}
\end{figure}

The output of the plant before reaching the controller is affected by stochastic uncertainty and is given by 
$y_k = \Xi_O y_p.$
Similarly, the input to the plant from controller is affected by stochastic uncertainty which is 
$u_p = \Xi_I u_k.$
The output ($\Xi_O$) and input ($\Xi_I$) channel uncertainties can be separated into mean and zero mean part as shown below. 
\[\Xi_O = \Lambda_O + \Sigma_O \frac{d\Delta_O}{dt}, \quad  \Xi_I = \Lambda_I + \Sigma_I \frac{d\Delta_I}{dt},\]
where $\Lambda_{I(O)}$ are the mean part of the input (output) channel uncertainty, $\Sigma_{I(O)}$ are the standard deviation of the input (output) channel uncertainty, and $\Delta_{I(O)}$ denotes the vector valued independent Wiener processes. The corresponding matrices are defined as, 
\begin{align*}
& \mathsmaller{ \Lambda_O:= {\rm diag} (\lambda_O^1,\ldots,\lambda_O^q),  \Lambda_I:= {\rm diag} (\lambda_I^1,\ldots,\lambda_I^d),} \\
& \mathsmaller{ \Sigma_O:= {\rm diag} (\sigma_O^1,\ldots, \sigma_O^q),  \Sigma_I := {\rm diag}(\sigma_I^1,\ldots, \sigma_I^d),}  \\
& \mathsmaller{ \frac{d\Delta_O}{dt}:= {\rm diag} \Big(\frac{d\Delta_O^1}{dt},\ldots, \frac{d\Delta_O^q}{dt}\Big), \frac{d\Delta_I}{dt}:= {\rm diag} \Big(\frac{d\Delta_I^1}{dt},\ldots, \frac{d\Delta_I^d}{dt}\Big).} 
\end{align*}
Both the input and output channel uncertainties are assumed to be uncorrelated. 

Figure \ref{fig_ip_op_unc2}a consists of MIMO plant and controller interacting through uncertain inputs and outputs. The nominal part of this stochastic closed-loop system ($\mathbb{G}$) is the deterministic part of the stochastic closed-loop system which consists of MIMO plant ($\mathbb{P}$), controller ($\mathbb{K}$) and the mean part of the uncertainties ($\Lambda_O, \Lambda_I$). This nominal part, denoted by  $\mathbb{G}={\cal F}(\mathbb{P},\mathbb{K})$, which is essentially the feedback interconnection of plant and controller interacting through the mean part of uncertain channels and is shown inside the dotted line in Fig. \ref{fig_ip_op_unc2}a. Now, the nominal system in Fig. \ref{fig_ip_op_unc2}a, interacts with the stochastic uncertainty via the disturbance ($w_{op} \in \mathbb{R}^q$ and $w_{ip} \in \mathbb{R}^d$) and control signals ($z_{op} \in \mathbb{R}^q$ and $z_{ip} \in \mathbb{R}^d$). The disturbance and control signals are defined as follows, $w_{op}=\Sigma_O \frac{d\Delta_O}{dt} z_{op}, w_{ip}=\Sigma_I \frac{d\Delta_I}{dt} z_{ip},$ and $z_{op}=y_p=C_p x_p, z_{ip}=u_k=C_k x_k$.
The nominal system has the following state space form. 
\begin{equation} \mathbb{G}:\left\{
\begin{array}{lcl}
\dot x&=&A x+B w \\
z&=&Cx
\end{array}\right.,
\label{eq_plant_dynamics}
\end{equation}
where $x=\begin{bmatrix} x_p^\top & x_k^\top\end{bmatrix}^\top\in \mathbb{R}^{2n}$, $z=\begin{bmatrix}z_{op}^\top & z_{ip}^\top\end{bmatrix}^{\top}\in \mathbb{R}^m$, $w=\begin{bmatrix}w_{op}^\top & w_{ip}^\top\end{bmatrix}^{\top}\in \mathbb{R}^m$, $C = {\rm diag} (C_p, C_k)$, 
\[\mathsmaller{A=\begin{bmatrix}A_p& B_p \Lambda_I C_k \\B_k \Lambda_O C_p& A_c \end{bmatrix},\;\;\; B= \begin{bmatrix}0& B_p\\B_k&0\end{bmatrix}.}\]

Finally, this nominal system, $\mathbb{G}$ interacting with stochastic uncertainty, $\frac{d\Delta}{dt}$ can be written in the standard robust control form, $\mathcal{F}(\mathbb{G},\frac{d\Delta}{dt})$ as shown in Fig. \ref{fig_ip_op_unc2}b, where the stochastic uncertainty, $\frac{d\Delta}{dt} = \text{diag}\Big(\sigma_1 \frac{d\Delta_1}{dt}, \dots, \sigma_m \frac{d \Delta_m}{dt}\Big)$. We show the nominal part of the stochastic closed-loop system inside the dotted line in Fig. \ref{fig_ip_op_unc2}b and for clarity, the individual uncertain channels are shown in feedback. Thus, the resultant stochastic closed-loop system (stochastic MIMO system) has number of feedback connections equal to the number of uncertainties. We now re-enumerate the input and output uncertainties, $\Delta^1_I,\ldots, \Delta^d_I$, $\Delta^1_O,\ldots, \Delta^q_O$ as $\Delta_1, \Delta_2, \dots, \Delta_m$, where $m = d+q$. The closed-loop system $\mathcal{F}(\mathbb{G},\frac{d\Delta}{dt})$ has the following sate space form.
\begin{align}
\begin{array}{lcl}
\dot x & = & Ax+Bw \\
z & = & Cx\\
w & = &\frac{d\Delta}{dt} z:=\begin{bmatrix}\Sigma_O \frac{d\Delta_O}{dt}&0\\0&\Sigma_I \frac{d\Delta_I}{dt}\end{bmatrix} z
\end{array}
\label{eq_system_unc}
\end{align}
where $\frac{d\Delta}{dt} = \text{diag}\Big(\sigma_1 \frac{d\Delta_1}{dt}, \dots, \sigma_m \frac{d \Delta_m}{dt}\Big)$.
\begin{remark} Although we arrive at system (\ref{eq_system_unc}) given in standard robust control form with input and output channel uncertainties, the framework is general enough to model stochastic parametric uncertainty in system plant, $A_p$, matrix.  
\end{remark}
Before we conclude this section, we show mathematically that choosing a proper controller will lead to multiplication of white noise processes which many not be a white noise process. 

\subsection{Stochastic uncertainty modeling with proper controller}
\label{sec_proper_controller}
The choice of strictly proper controller is necessary in our formulation to model the stochastic uncertainty in both input and output channels. In this work, the stochastic uncertainty entering the input and output channel is assumed to be uncorrelated and these stochastic uncertainties (white noise) in the feedback loop will multiply when they traverse around the loop. While the multiplication of two white noise process is well defined, the resulting process might not be a white noise and hence the FP equation for the evolution of density cannot be defined. 

In the following, we now show mathematically the choice of proper controller (but NOT strictly proper) will lead to multiplication of white noise process in the closed-loop system. 

Consider the state space of a strictly proper plant:
\begin{equation*}\mathbb{P}:\left\{
\begin{array}{lcl}
\dot x_p &=&A_p x_p+B_p u_p\\
y_p&=&C_p x_p
\end{array}\right.,
\end{equation*}
and the state space of a proper controller:
\begin{equation*}\mathbb{K}: \left\{
\begin{array}{lcl}
\dot x_k&=&A_k x_k + B_k y_k\\
u_k &=&C_k x_k + D_k y_k
\end{array}\right.,
\end{equation*}
and this feedback interconnection of plant and controller are connected through the uncertain channels as shown in Fig. \ref{fig1_example}.
\begin{figure}[h]
\begin{center}
\includegraphics[scale=1.5]{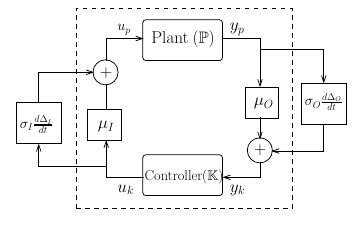}
\caption{Plant and controller with uncertainty in feedback} 
\label{fig1_example}
\end{center}
\end{figure}

The input to the controller and the input to the plant are given by:
\begin{align*}
u_p = & \left(\mu_I + \sigma_I \frac{d\Delta_I}{dt} \right) u_k, \\
y_k = & \left(\mu_O + \sigma_O \frac{d\Delta_O}{dt}\right) y_p,
\end{align*}
where $\xi_O = \frac{d\Delta_O}{dt}, \xi_I = \frac{d\Delta_I}{dt}$ with $\xi_O, \xi_I$ being the white noise processes and $\Delta_O, \Delta_I$ are the independent standard Wiener processes. Now, the closed-loop system is given by 
\begin{align*}
\dot{x}_p = & (A_p + \mu_I \mu_O B_p D_k C_p) x_p + \mu_I B_p C_k x_k + \mu_I \sigma_O B_p D_k C_p x_p \xi_O + \sigma_I B_p C_k x_k \xi_I \\
& + \sigma_I \mu_O B_p D_k C_p x_p \xi_I + \textcolor{red}{\sigma_I \sigma_O B_p D_k C_p x_p \xi_O \xi_I} \\
\dot{x}_k = & A_k x_k + B_k (\mu_O+\sigma_O \xi_O) C_p x_p 
\end{align*}

Notice that, the white noise processes are multiplying in the above closed-loop system (the term marked in red color). This multiplication can be avoided when we consider strictly proper controller or uncertainty in either input or output channels. 
If the uncertainty is assumed either at the input side or output side (but not both side) of the plant, then we can consider the controller to be a proper system. Stability analysis and controller synthesis results for the case with uncertainty either at the input or at the output case without strictly proper assumption on the controller dynamics have been shown in \cite{Sai_cdc2015}.

In the next section, we derive necessary and sufficient conditions for mean square exponential stability of system \eqref{eq_system_unc}. 
%
\section{ Mean Square Stability Analysis}
\label{stability}
We first extend the notion of mean square norm for discrete-time system from \cite{scl04} to continuous-time system. The mean square norm will be used to analyze the mean square stability of system \eqref{eq_system_unc} i.e., the feedback interconnection $\mathcal{F}(\mathbb{G},\frac{d\Delta}{dt})$. However, the norm itself is defined for nominal system $\mathbb{G}$ with multiple inputs and outputs.

\begin{definition}\label{def_msn}[Mean Square Norm] The mean square norm for nominal system, $\mathbb{G}$, is defined as follows. 
\begin{equation*}
{\parallel \mathbb{G} \parallel_{MS}=\max_{i=1,\ldots, m}\sqrt{\sum_{j=1}^m \parallel G_{ij}\parallel^2_2},}
\end{equation*} 
where the system, $G_{ij}$ denotes the transfer function of the nominal system corresponding to the input $j$ and output $i$ and $\parallel G_{ij} \parallel_2$ denotes the standard $\mathcal{H}_2$ norm.
\end{definition}
\begin{remark}
In the definition of mean square norm given above, number of inputs and outputs to the nominal system depend on number of uncertainties in the input and output channels. For example, in Fig. 1, there are $d$ inputs, $q$ outputs respectively and hence, there are $d+q:=m$ feedback channels in the stochastic closed-loop system shown in Fig. \ref{fig_ip_op_unc2}b.
\end{remark}
The stochastic interconnected system \eqref{eq_system_unc} can be written as system \eqref{stochastic_sys} for which, the mean square exponential stability given in Definition \ref{def_mse} applies. We make the following assumption on the feedback interconnected system \eqref{eq_system_unc}. 
%
\begin{assumption}
\label{assumptions}
\begin{enumerate}
\item [(a)] The deterministic system \eqref{eq_plant_dynamics} denoted by $\mathbb{G}$ is internally stable, that is, $A$ is Hurwitz and moreover, $\mathbb{G}$ is considered to be stabilizable, detectable and strictly proper.
\item[(b)] The initial state of the system $\mathbb{G}$, denoted by $x(0)$ has bounded variance and is independent from $\Delta_i(t)$ for each $i \in \{1,\dots,m\}$. 
\end{enumerate}
\end{assumption}
These assumptions are common in the control literature \cite{scl04, Dullerud_Paganini}. To investigate the stochastic stability of the feedback interconnection ${\cal F}(\mathbb{G},\frac{d\Delta}{dt})$, it is necessary condition, that the system $\mathbb{G}$ is internally stable. The assumption on stabilizability and detectability is required in the design of controller and for the computation of $\mathcal{H}_2$ norm \cite{Dullerud_Paganini}.  The assumption on strictly proper nature of plant and controller is to avoid the product of two white noise processes (which is defined, but the resultant process might not be a white noise), when the loop is closed. Finally, the assumption on initial condition is to avoid the complexity of math due to correlations.

The following theorem provides necessary and sufficient conditions for the mean square exponential stability of the interconnected system \eqref{eq_system_unc}. 
\begin{theorem}
\label{thm_mss_lyap_eq}
Under Assumption \ref{assumptions}, the feedback interconnected system \eqref{eq_system_unc} shown in Fig. \ref{fig_ip_op_unc2}b is mean square exponentially stable, if and only if, there exists a $P > 0$, such that, it satisfies
\begin{equation}
{A^{\top} P + P A +  \sum_{{\ell}=1}^{m}  \sigma_{\ell}^2 C_{\ell}^{\top} B_{\ell}^{\top} P B_{\ell} C_{\ell} < 0.}
\label{eq_mss_lyap}
\end{equation}
\label{thm_mss_lyap}
\end{theorem}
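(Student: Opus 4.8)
The plan is to reduce the interconnected system \eqref{eq_system_unc} to the canonical form \eqref{stochastic_sys} and then invoke the covariance machinery already developed. First I would substitute $w=\frac{d\Delta}{dt}Cx$ into $\dot x=Ax+Bw$. Because $\frac{d\Delta}{dt}$ is diagonal with scalar entries $\sigma_\ell\frac{d\Delta_\ell}{dt}$, the term $B\frac{d\Delta}{dt}C$ decomposes channel-by-channel as $\sum_{\ell=1}^m \sigma_\ell\frac{d\Delta_\ell}{dt}\,B_\ell C_\ell$, where $B_\ell$ is the $\ell$-th column of $B$ and $C_\ell$ the $\ell$-th row of $C$. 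Hence \eqref{eq_system_unc} is exactly \eqref{stochastic_sys} with the rank-one noise coefficients $\widehat B_\ell:=B_\ell C_\ell$. This identification is the key observation, and it is what produces the product $C_\ell^\top B_\ell^\top P B_\ell C_\ell$ in \eqref{eq_mss_lyap}: it is precisely $\widehat B_\ell^\top P\widehat B_\ell$.

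With this identification, I would apply Lemma \ref{lemma_cov} (with $H=0$) to conclude that the covariance $\bar Q(t)$ obeys the closed linear MDE $\dot{\bar Q}=A\bar Q+\bar Q A^\top+\sum_\ell \sigma_\ell^2\,\widehat B_\ell\,\bar Q\,\widehat B_\ell^\top$, and then use the preceding lemma (vectorization via $\phi$) to record that \eqref{eq_system_unc} is mean square exponentially stable if and only if the covariance operator $\mathcal T(\bar Q):=A\bar Q+\bar Q A^\top+\sum_\ell \sigma_\ell^2\,\widehat B_\ell\,\bar Q\,\widehat B_\ell^\top$ is Hurwitz (equivalently $\mathscr A$ is Hurwitz). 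The theorem is therefore equivalent to the statement that $\mathcal T$ is Hurwitz if and only if there is a $P>0$ with $\mathcal T^\ast(P)<0$, where the adjoint with respect to $\langle X,Y\rangle=\trace(XY)$ is $\mathcal T^\ast(P)=A^\top P+PA+\sum_\ell \sigma_\ell^2\,\widehat B_\ell^\top P\widehat B_\ell=A^\top P+PA+\sum_\ell \sigma_\ell^2 C_\ell^\top B_\ell^\top P B_\ell C_\ell$, which is the left side of \eqref{eq_mss_lyap}. Computing this adjoint is a short trace manipulation (cyclic property plus symmetry of $P$), and it coincides with the generator matrix appearing in \eqref{eq_diff_oper}.

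For sufficiency I would take $V(x)=x^\top P x$ and use the generator identity already derived in \eqref{eq_deriv_oper}--\eqref{eq_diff_oper} (with $H=0$) to get $\frac{d}{dt}E[V]=E[x^\top \mathcal T^\ast(P)x]=\trace(\mathcal T^\ast(P)\bar Q)$. If $\mathcal T^\ast(P)<0$ then $\frac{d}{dt}E[V]\le -\frac{\lambda_{\min}(-\mathcal T^\ast(P))}{\lambda_{\max}(P)}E[V]$, and a Gr\"onwall argument together with $\lambda_{\min}(P)\,E[x^\top x]\le E[V]\le \lambda_{\max}(P)\,E[x^\top x]$ yields Definition \ref{def_mse} with $\beta_1=\lambda_{\max}(P)/\lambda_{\min}(P)$ and $\beta_2=\lambda_{\min}(-\mathcal T^\ast(P))/\lambda_{\max}(P)$. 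For necessity I would construct $P$ from the stable adjoint semigroup: since $\mathcal T$ is Hurwitz so is $\mathcal T^\ast$ (same spectrum), hence $P:=\int_0^\infty e^{\mathcal T^\ast t}(Q)\,dt$ converges for any $Q>0$ and satisfies $\mathcal T^\ast(P)=-Q<0$, with positivity $P>0$ following because $\mathcal T^\ast$ generates a positivity-preserving (Lyapunov-type) semigroup.

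I expect the necessity direction to be the main obstacle, specifically establishing $P>0$ for the constructed $P$: unlike the classical Lyapunov theorem, here one cannot simply quote that $\mathscr A$ Hurwitz gives a positive certificate for the vectorized system, because an arbitrary $n^2\times n^2$ Lyapunov certificate need not correspond to a \emph{symmetric-matrix} $P$ through $\phi$. The structure-preserving argument therefore relies on $\mathcal T^\ast$ being a generalized (positive) Lyapunov operator, and the cleanest route is to invoke the generalized Lyapunov theorem for such operators rather than re-derive it; the cone-preservation of the semigroup is the step that must be handled with care.
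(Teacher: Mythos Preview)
Your proposal is correct and follows essentially the same route as the paper. The identification of \eqref{eq_system_unc} with \eqref{stochastic_sys} via $\widehat B_\ell=B_\ell C_\ell$ is exactly what the paper uses (implicitly, through Lemma~\ref{lemma_cov}); your sufficiency argument via $V(x)=x^\top Px$ and the generator is the same as the paper's Lyapunov function $V(Q)=\trace(QP)$ on the covariance, since $E[V]=\trace(P\bar Q)$; and your necessity construction $P=\int_0^\infty e^{\mathcal T^\ast t}(Q)\,dt$ is exactly the paper's $P=\lim_{t\to\infty}\int_0^t R(\tau)\,d\tau$ with $R$ solving the adjoint MDE from $R(0)=Q>0$. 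The one point you single out---that $P>0$ requires the adjoint semigroup to preserve the PSD cone---is indeed the delicate step, and the paper's proof glosses over it just as you anticipated; your observation that $\mathcal T^\ast(R)=A^\top R+RA+\Phi(R)$ with $\Phi$ completely positive (hence the semigroup is cone-preserving) is the right way to close it.
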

%
\begin{proof}
\textit{Sufficiency:} The covariance propagation equation for the feedback interconnected system with uncertainty is
\begin{align}
{\dot{Q}(t)} = & {Q(t)A^\top+AQ(t)+ \sum_{{\ell}=1}^m \sigma_{\ell}^2 B_{\ell} C_{\ell} Q(t) C_{\ell}^{\top} B_{\ell}^\top.}
\label{eq_cov_MIMO}
\end{align}
This covariance propagation equation is a matrix differential equation and follows from Lemma \ref{lemma_cov}. To achieve mean square exponentially stable, $Q(t)$ should converge to zero exponentially. To show this, we construct the Lyapunov function $V(Q(t)) = \trace(Q(t)P)$, where $P > 0$. Then, $$\dot{V}(Q(t)) =
 {\trace\Big((Q(t)A^\top+AQ(t) + \sum_{{\ell}=1}^m \sigma_{\ell}^2 B_{\ell} C_{\ell} Q(t) C_{\ell}^{\top} B_{\ell}^\top)P\Big).} $$
Then, we obtain, $\dot{V}(Q(t)) =\trace(-Q(t)M)$ for some positive matrix $M > 0$. Since $M>0$, there exits an 
 $\alpha := \frac{\lambda_{min}(M)}{\lambda_{max}(P)}>0$, 
such that $\alpha P\leq M$. Therefore, $\dot{V}(Q(t)) \leq -\alpha  V(Q(t))$ and it follows, system \eqref{eq_system_unc} is mean square exponentially stable.

\textit{Necessity:} Let $\phi: \mathbb{R}^{n \times n} \to \mathbb{R}^{n^2}$ be a bijective operator\cite[Chapter 2]{Costa_book} which converts a matrix into a column vector. 
Assume, system \eqref{eq_system_unc} to be mean square exponentially stable. Then, we know, the covariance matrix, $Q(t)$, converges exponentially to zero. This implies, we have a stable evolution for $q(t)=\phi(Q(t))\in \mathbb{R}^{n^2}$ with the following dynamics,
\begin{align}
\dot{q}(t) & = \mathscr{A} q(t),
\label{eq_cov_MIMO_lde}
\end{align}
where $\mathscr{A} = A \oplus A + \sum_{{\ell} = 1}^m \sigma_{\ell}^2 (B_{\ell} C_{\ell} \otimes B_{\ell} C_{\ell})$. 
Stability of system \eqref{eq_cov_MIMO_lde} implies $\mathscr{A}$ is Hurwitz and hence $\mathscr{A}^{\top}$ is also Hurwitz. Therefore, the evolution, $\dot{r}(t)  = \mathscr{A}^{\top} r(t)$ is stable and  satisfies the following matrix differential equation,
\begin{align}
\dot{R}(t) = & A^{\top} R(t) + R(t) A + \sum_{{\ell} =1}^m \sigma_{\ell}^2 C_{\ell}^{\top} B_{\ell}^{\top} R(t) B_{\ell} C_{\ell}
\label{eq_MIMO_Lyap}
\end{align}
which is also stable, where $R(t) = \phi^{-1}(r(t))$. Let $R(0) > 0$, and $R(t)$ denote the solution for Eq. \eqref{eq_MIMO_Lyap}. Since $R(t)$ satisfies the stable first order linear differential equation, the function
$P(t) = \int_{0}^t R(\tau) d\tau $ has a finite value. Integrating on both sides of Eq. \eqref{eq_MIMO_Lyap} and simplifying, we obtain
\begin{align*}
\dot{P}(t) - R(0)  = &
A^{\top} P(t) + P(t) A +  \sum_{{\ell} = 1}^m \sigma_{\ell}^2 C_{\ell}^{\top} B_{\ell}^{\top} P(t) B_{\ell} C_{\ell}.
\end{align*}
Observing that Eq. \eqref{eq_MIMO_Lyap} is stable, as $t \to \infty$, we obtain,
\begin{align*}
A^{\top} P + P A +  \sum_{{\ell}=1}^m \sigma_{\ell}^2 C_{\ell}^{\top} B_{\ell}^{\top} P B_{\ell} C_{\ell} = -R(0), 
\end{align*}
where $P := \lim_{t \to \infty} P(t)$. The result now follows by noticing that $R(0) > 0$.
\end{proof}

The ensuing result gives an alternative representation of the inequality given in Eq. \eqref{eq_mss_lyap}, which is helpful in writing the LMI-based optimization for computing the mean square norm of system $\mathbb{G}$. The following lemmas, theorems, and their proofs can be viewed as the continuous-time counterpart of the discrete-time results from \cite{scl04}. 

\begin{lemma}
The inequality, Eq. \eqref{eq_mss_lyap} holds if and only if there exists a $\mathcal{Q} > 0$, and $\alpha_{\ell} >0$ for every ${\ell} = 1,2,\dots,m$, such that
\begin{align}
\begin{aligned}
& \mathsmaller{A\mathcal{Q} + \mathcal{Q}A^{\top} +  \sum_{{\ell} = 1}^{m} B_{\ell} \alpha_{\ell} B_{\ell}^{\top} < 0,} \\
& {\alpha_{\ell} > \sigma_{\ell}^2 C_{\ell} \mathcal{Q} C_{\ell}^{\top}, \quad {\ell} = 1,2,\dots, m.}
\end{aligned}
\label{eq_mss_2lmi}
\end{align}
\label{lemma_mss_Q}
\end{lemma}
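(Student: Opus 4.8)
The plan is to split the biconditional into two links. First I would show that, after eliminating the scalar slack variables $\alpha_\ell$, the two-inequality system \eqref{eq_mss_2lmi} is equivalent to the single matrix inequality
\[
A\mathcal{Q} + \mathcal{Q}A^{\top} + \sum_{\ell=1}^{m}\sigma_\ell^{2}\, B_\ell C_\ell \mathcal{Q} C_\ell^{\top} B_\ell^{\top} < 0,\qquad \mathcal{Q}>0,
\]
which I will refer to as inequality (D). Then I would show that (D) is equivalent to \eqref{eq_mss_lyap}. The guiding observation is that (D) is precisely inequality \eqref{eq_mss_lyap} written for the transposed data $(A^{\top},C_\ell^{\top},B_\ell^{\top})$, so this second equivalence will come from the duality between a linear operator and its adjoint, both of which certify the same Hurwitz property of the covariance operator $\mathscr{A}$ appearing in \eqref{eq_cov_MIMO_lde}.

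For the slack-elimination link, the key structural remark is that $C_\ell$ is a row and $B_\ell$ a column, so $C_\ell \mathcal{Q} C_\ell^{\top}$ is a scalar and hence $\sigma_\ell^{2} B_\ell C_\ell \mathcal{Q} C_\ell^{\top} B_\ell^{\top} = (\sigma_\ell^{2} C_\ell \mathcal{Q} C_\ell^{\top})\, B_\ell B_\ell^{\top}$. If $(\mathcal{Q},\alpha_\ell)$ satisfies \eqref{eq_mss_2lmi}, then $\alpha_\ell B_\ell B_\ell^{\top} \succeq (\sigma_\ell^{2} C_\ell \mathcal{Q} C_\ell^{\top}) B_\ell B_\ell^{\top}$ by positivity, and substituting this bound into the first inequality of \eqref{eq_mss_2lmi} yields (D). Conversely, given $\mathcal{Q}$ feasible for (D), I would set $\alpha_\ell = \sigma_\ell^{2} C_\ell \mathcal{Q} C_\ell^{\top} + \epsilon$; for $\epsilon>0$ small enough, continuity preserves the strict inequality and one recovers \eqref{eq_mss_2lmi} with $\alpha_\ell>0$. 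This link is elementary.

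For the duality link I would vectorize with the operator $\phi$, exactly as in the proofs of Lemma \ref{lemma_cov} and Theorem \ref{thm_mss_lyap}. The identities $(A\oplus A)^{\top} = A^{\top}\oplus A^{\top}$ and $(B_\ell C_\ell \otimes B_\ell C_\ell)^{\top} = C_\ell^{\top}B_\ell^{\top}\otimes C_\ell^{\top}B_\ell^{\top}$ show that \eqref{eq_mss_lyap} becomes a negative Lyapunov certificate for $\mathscr{A}^{\top}$, while (D) becomes the same for $\mathscr{A}$, where $\mathscr{A} = A\oplus A + \sum_\ell \sigma_\ell^{2}(B_\ell C_\ell \otimes B_\ell C_\ell)$. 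The purely algebraic equivalence contained in the proof of Theorem \ref{thm_mss_lyap} — namely that \eqref{eq_mss_lyap} is solvable by some $P>0$ if and only if $\mathscr{A}$ is Hurwitz — applies verbatim to the transposed triple $(A^{\top},C_\ell^{\top},B_\ell^{\top})$, whose covariance operator is exactly $\mathscr{A}^{\top}$; this gives that (D) is solvable if and only if $\mathscr{A}^{\top}$ is Hurwitz. Since $\mathscr{A}$ and $\mathscr{A}^{\top}$ share their spectrum, $\mathscr{A}$ is Hurwitz if and only if $\mathscr{A}^{\top}$ is, and therefore (D) $\Leftrightarrow$ \eqref{eq_mss_lyap}. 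Chaining the two links closes the proof.

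The main obstacle I expect lies in this second link. The naive route of a congruence with $\mathcal{Q}=P^{-1}$ fails: it produces $\sum_\ell \sigma_\ell^{2}(B_\ell^{\top}\mathcal{Q}^{-1}B_\ell)\,\mathcal{Q}C_\ell^{\top}C_\ell \mathcal{Q}$, which has the wrong structure, so the equivalence is genuinely the operator/adjoint duality and not an inverse congruence. One must then be careful that the certifying $\mathcal{Q}>0$ is actually positive definite. If I wanted a self-contained argument avoiding a re-citation of Theorem \ref{thm_mss_lyap} under its standing Assumption \ref{assumptions}, I would construct it as $\mathcal{Q}=\int_{0}^{\infty}S(\tau)\,d\tau$ where $\dot S = \mathscr{L}^{*}(S) := AS+SA^{\top}+\sum_\ell \sigma_\ell^{2}B_\ell C_\ell S C_\ell^{\top}B_\ell^{\top}$ with $S(0)=I$, using that this covariance flow preserves positive definiteness and decays when $\mathscr{A}$ is Hurwitz, and for the reverse implication I would use the Lyapunov function $\trace(\mathcal{Q}X)$ along the adjoint flow $\dot X=\mathscr{L}(X)$. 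Both deliver the same matrix-algebra equivalence between (D) and \eqref{eq_mss_lyap}.
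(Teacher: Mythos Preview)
Your proposal is correct and follows the same two-step skeleton as the paper: first pass to the dual inequality
\[
A\mathcal{Q}+\mathcal{Q}A^{\top}+\sum_{\ell=1}^{m}\sigma_\ell^{2}\,B_\ell C_\ell\mathcal{Q}C_\ell^{\top}B_\ell^{\top}<0,\qquad \mathcal{Q}>0,
\]
then eliminate or introduce the scalar slacks $\alpha_\ell$. Your slack-elimination argument (substitute the bound in one direction; add a small $\epsilon$ in the other) is exactly what the paper does, except that the paper phrases the reverse direction by writing the strict inequality as an equality with margin $M>0$ and then choosing $\epsilon>0$ so that $\epsilon\sum_\ell B_\ell B_\ell^{\top}<M$ before setting $\alpha_\ell=\epsilon+\sigma_\ell^{2}C_\ell\mathcal{Q}C_\ell^{\top}$.

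The substantive difference is in the duality link. The paper simply declares that the displayed inequality is ``the dual inequality equivalent to Eq.~\eqref{eq_mss_lyap}'' and offers no argument. You correctly note that the naive congruence $\mathcal{Q}=P^{-1}$ does not produce (D), and instead route the equivalence through the vectorized operator $\mathscr{A}=A\oplus A+\sum_\ell\sigma_\ell^{2}(B_\ell C_\ell\otimes B_\ell C_\ell)$: \eqref{eq_mss_lyap} is a Lyapunov certificate for $\mathscr{A}^{\top}$, (D) is one for $\mathscr{A}$, and the two share their spectrum. This is the right justification and fills a gap the paper leaves implicit. Your fallback construction of $\mathcal{Q}=\int_0^\infty S(\tau)\,d\tau$ along the covariance flow, to avoid re-invoking Theorem~\ref{thm_mss_lyap} under Assumption~\ref{assumptions}, is also sound, though within the paper's context citing the algebraic content of that theorem for the transposed data $(A^{\top},C_\ell^{\top},B_\ell^{\top})$ is enough.
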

\vspace{-0.3 cm}
\begin{proof}
The dual inequality equivalent to Eq. \eqref{eq_mss_lyap} is
\begin{align}
A \mathcal{Q} + \mathcal{Q} A^{\top} +  \sum_{{\ell}=1}^m \sigma_{\ell}^2  B_{\ell} C_{\ell} \mathcal{Q} C_{\ell}^{\top} B_{\ell}^{\top} < 0,
\label{eq_MIMO_cov_mde}
\end{align}
where $\mathcal{Q} > 0$.
Observe the straightforward substitution leads to a sufficiency condition. In showing the necessary part, for some matrix $M >0 $, the inequality \eqref{eq_MIMO_cov_mde} can be rewritten as
\begin{align}
A \mathcal{Q} + \mathcal{Q} A^{\top} +  \sum_{{\ell}=1}^m  \sigma_{\ell}^2  B_{\ell} C_{\ell} \mathcal{Q} C_{\ell}^{\top} B_{\ell}^{\top} + M = 0.
\label{eq_1}
\end{align}
Since we have $\sum_{{\ell}=1}^m  B_{\ell} B_{\ell}^{\top} \geq 0$, there exists $\epsilon (M) := \epsilon > 0$, such that $0 \leq \epsilon \textstyle \sum_{{\ell}=1}^m B_{\ell} B_{\ell}^{\top} < M.$
Using this in Eq. \eqref{eq_1}, 
\begin{align*}
&A \mathcal{Q} + \mathcal{Q} A^{\top} +  \sum_{{\ell}=1}^m  \sigma_{{\ell}}^2  B_{\ell} C_{\ell} \mathcal{Q} C_{\ell}^{\top} B_{\ell}^{\top} + \epsilon  \sum_{{\ell}=1}^m  B_{\ell} B_{\ell}^{\top} < 0, \\
\Rightarrow &A \mathcal{Q} + \mathcal{Q} A^{\top} +  \sum_{{\ell}=1}^m \left( \epsilon + \tilde \sigma_{{\ell}}^2 C_{\ell} \mathcal{Q} C_{\ell}^{\top}\right) B_{\ell} B_{\ell}^{\top} < 0.
\end{align*}
By defining $\alpha_{{\ell}} = \epsilon + \sigma_{{\ell}}^2 C_i \mathcal{Q} C_{\ell}^{\top}$, we obtain Eq. \eqref{eq_mss_2lmi}. 
\end{proof}

In the ensuing result, an LMI-based optimization formulation is provided for the computation of mean square system norm.
\begin{lemma}
Suppose $A$ is Hurwitz and let $\theta > 0$ be a diagonal matrix. Then, we obtain, $\parallel \theta^{-1} \mathbb{G} \theta \parallel_{MS}^2 =  \inf_{\mathcal{P} > 0, \mathcal{S} > 0, \gamma} \gamma$ subject to $\mathcal{S}_{{\ell}{\ell}} < \gamma, \ {\ell} =1,2,\dots,m,$
\begin{equation*}
{
\begin{bmatrix}
A^{\top} \mathcal{P} + \mathcal{P} A & \mathcal{P} B \theta \\
\theta B^{\top} \mathcal{P} & - I
\end{bmatrix} < 0, \;\ \begin{bmatrix}
\theta \mathcal{S} \theta & C \\
C^{\top} & \mathcal{P}
\end{bmatrix} > 0.}
\end{equation*}
\label{lemma_lmi_mss}
\end{lemma}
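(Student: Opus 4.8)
The plan is to read the mean square norm of the scaled system through the controllability Gramian of its nominal realization, and then decode the two matrix inequalities by Schur complements so that they become statements about super-solutions of that Gramian's Lyapunov equation.

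First I would make the objective concrete. The scaled system $\theta^{-1}\mathbb{G}\theta$ has the state-space realization $(A,\,B\theta,\,\theta^{-1}C)$, so its $(\ell,j)$ transfer function is $\theta_\ell^{-1}G_{\ell j}(s)\theta_j$ with $\theta={\rm diag}(\theta_1,\ldots,\theta_m)$. For a fixed output $\ell$, the associated row transfer function $\theta_\ell^{-1}C_\ell(sI-A)^{-1}B\theta$ has squared $\mathcal{H}_2$ norm $\sum_{j=1}^m \theta_\ell^{-2}\theta_j^2\parallel G_{\ell j}\parallel_2^2$, and by the Gramian formula for the $\mathcal{H}_2$ norm this equals $(\theta^{-1}CWC^{\top}\theta^{-1})_{\ell\ell}$, where $W>0$ is the unique solution of $AW+WA^{\top}+B\theta^2 B^{\top}=0$ (which exists because $A$ is Hurwitz). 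Hence $\parallel\theta^{-1}\mathbb{G}\theta\parallel_{MS}^2=\max_\ell (\theta^{-1}CWC^{\top}\theta^{-1})_{\ell\ell}$, the largest diagonal entry of $\theta^{-1}CWC^{\top}\theta^{-1}$.

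Next I would decode the two LMIs. The first inequality has lower-right block $-I<0$, so its Schur complement is the Riccati inequality $A^{\top}\mathcal{P}+\mathcal{P}A+\mathcal{P}B\theta^2 B^{\top}\mathcal{P}<0$; congruence by $\mathcal{Q}:=\mathcal{P}^{-1}$ converts this to $A\mathcal{Q}+\mathcal{Q}A^{\top}+B\theta^2 B^{\top}<0$. Subtracting the Gramian equation and using that $A$ is Hurwitz, this strict super-solution property forces $\mathcal{Q}>W$. The second inequality has lower-right block $\mathcal{P}>0$, so its Schur complement is $\theta\mathcal{S}\theta>C\mathcal{Q}C^{\top}$, i.e. $\mathcal{S}>\theta^{-1}C\mathcal{Q}C^{\top}\theta^{-1}$. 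Combining these with the constraints $\mathcal{S}_{\ell\ell}<\gamma$ and the monotonicity $\mathcal{Q}>W\Rightarrow C\mathcal{Q}C^{\top}\geq CWC^{\top}$, every feasible point obeys $\gamma>\mathcal{S}_{\ell\ell}>(\theta^{-1}C\mathcal{Q}C^{\top}\theta^{-1})_{\ell\ell}\geq(\theta^{-1}CWC^{\top}\theta^{-1})_{\ell\ell}$ for each $\ell$, whence $\gamma>\parallel\theta^{-1}\mathbb{G}\theta\parallel_{MS}^2$; taking the infimum gives the ``$\geq$'' direction. For the ``$\leq$'' direction I would exhibit a family of feasible points that approaches the norm: let $\mathcal{Q}_\epsilon$ solve $A\mathcal{Q}_\epsilon+\mathcal{Q}_\epsilon A^{\top}+B\theta^2 B^{\top}+\epsilon I=0$ (so $\mathcal{Q}_\epsilon>0$, $\mathcal{Q}_\epsilon\to W$ as $\epsilon\to 0^{+}$, and the first LMI holds strictly by construction with $\mathcal{P}_\epsilon=\mathcal{Q}_\epsilon^{-1}$), take $\mathcal{S}_\epsilon=\theta^{-1}C\mathcal{Q}_\epsilon C^{\top}\theta^{-1}+\epsilon I$ and $\gamma_\epsilon=\max_\ell(\mathcal{S}_\epsilon)_{\ell\ell}+\epsilon$. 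All three constraints then hold strictly, and $\gamma_\epsilon\to\parallel\theta^{-1}\mathbb{G}\theta\parallel_{MS}^2$, yielding equality.

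I expect the main obstacle to be the first step: justifying rigorously that the row-wise sum of squared $\mathcal{H}_2$ norms equals the corresponding diagonal entry of $\theta^{-1}CWC^{\top}\theta^{-1}$ (the Gramian identity $\parallel C_\ell(sI-A)^{-1}B\parallel_2^2=(CWC^{\top})_{\ell\ell}$ plus the bookkeeping of the diagonal scaling $\theta$), together with the careful treatment of the strict inequalities. In particular I must avoid the tempting but false claim that $\mathcal{Q}>W$ is \emph{equivalent} to the strict Lyapunov inequality; only the forward implication holds, which is exactly what the lower-bound argument needs, while the upper bound requires the explicit perturbed-Gramian construction above. These two observations, rather than the Schur-complement manipulations, are where the argument must be handled with care, and they also explain why the optimal value is an infimum that is not attained.
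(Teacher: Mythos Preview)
Your proposal is correct and follows essentially the same route as the paper: both decode the two LMIs by Schur complement into the Lyapunov-type inequality $A\mathcal{Q}+\mathcal{Q}A^{\top}+B\theta^2B^{\top}<0$ together with $\theta\mathcal{S}\theta>C\mathcal{Q}C^{\top}$, and both identify the mean square norm with the maximum diagonal entry of $\theta^{-1}CWC^{\top}\theta^{-1}$ via the standard $\mathcal{H}_2$--Gramian characterization (the paper just cites the $\mathcal{H}_2$-as-LMI formulation and then replaces $\sum_\ell\mathcal{S}_{\ell\ell}$ by $\max_\ell\mathcal{S}_{\ell\ell}$). Your treatment is in fact more careful than the paper's on two points the paper leaves implicit: the monotonicity step $\mathcal{Q}>W$ that yields the lower bound, and the explicit $\epsilon$-perturbed Gramian family showing the infimum is approached but not attained.
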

\begin{proof}
For the system, $\theta^{-1} \mathbb{G} \theta$, the mean square exponential stability conditions can be equivalently written as, there exists a $\mathcal{Q} > 0$, such that, it satisfies the following inequalities. 
\begin{align}
& A\mathcal{Q} + \mathcal{Q}A^{\top} + \sum_{\ell=1}^{m} B_\ell \theta_{\ell\ell}^2 B_\ell^{\top} < 0, \tag{27} \label{eq_mi1}\\
& \gamma_\ell \theta_{\ell\ell}^2 >  C_\ell \mathcal{Q} C_\ell^{\top}, \quad \ell = 1,2,\dots, m, \tag{28} \label{eq_mi2}
\end{align}
where $\theta_{\ell\ell}$'s are the diagonal elements of $\theta$.
The column vector, $B_\ell$'s are the columns of $B$ matrix and the row vector, $C_\ell$'s are the rows of $C$ matrix. Now, multiply on both sides of inequality \eqref{eq_mi1} by $\mathcal{Q}^{-1} := \mathcal{P}$, and writing it in compact form, we obtain
\begin{equation} \tag{29}
A^{\top} {\cal P} + {\cal P} A + {\cal P} B \theta \theta^{\top} B^{\top} {\cal P}^{\top} < 0.
\label{eq_mi3}
\end{equation}
Further, satisfying the mean square exponential stability conditions, the element wise inequalities shown in Eq. \eqref{eq_mi2} can be written in compact form as a linear matrix inequality as shown below.
\begin{align} \tag{30}
\begin{aligned}
& \theta \mathcal{S} \theta > C {\cal P}^{-1} C^{\top} \\
& \mathcal{S}_{\ell\ell} < \gamma_\ell, \quad \ell = 1,2,\dots, m.
\end{aligned}
\label{eq_mi4}
\end{align}
Now rewriting Eqs. \eqref{eq_mi3}, \eqref{eq_mi4} using Schur compliments, the computation for ${\cal H}_2$ norm problem can be written as an LMI optimization problem as shown below.
\begin{align*}
\parallel \theta^{-1} \mathbb{G} \theta \parallel_2^2 = & \inf_{\gamma_\ell, \mathcal{S}>0, \mathcal{P}>0}  \sum_\ell \gamma_\ell \\
& \text{subject to} \\
& \begin{bmatrix}
A^{\top} \mathcal{P} + \mathcal{P} A & \mathcal{P} B \theta \\
\theta B^{\top} \mathcal{P} & -I
\end{bmatrix} < 0 \\
& \begin{bmatrix} \theta \mathcal{S} \theta & C \\ C^{\top}  & \mathcal{P} \end{bmatrix} > 0 \\
& \mathcal{S}_{\ell\ell} < \gamma_\ell, \quad \ell =1,2,\dots,m.
\end{align*}
 Now, the cost is modified to obtain the result by observing the difference between $\parallel \mathbb{G} \parallel_{MS}^2 = \max_{\ell=1:m} \mathcal{S}_{\ell\ell}$ and $\parallel \mathbb{G} \parallel_2^2 = \sum_{\ell=1}^m \mathcal{S}_{\ell\ell}$.\end{proof}

The following theorem is the main result in this section and provides equivalent necessary, sufficient conditions for mean square exponential stability of feedback interconnected system \eqref{eq_system_unc}. 
In fact,  the results of the following theorem can be viewed as a stochastic counterpart of the small gain theorem for the continuous-time system.
\begin{theorem} \label{theorem_stability} Under Assumption \ref{assumptions}, consider the feedback interconnected system \eqref{eq_system_unc} as shown in Fig. \ref{fig_ip_op_unc2}b). Then, the following stability conditions for mean square exponentially stable are equivalent.
\begin{enumerate}
\item[(a)] The feedback interconnection of nominal system $\mathbb{G}$ with stochastic uncertainty, $\frac{d\Delta}{dt}$ is mean square exponentially stable.
\item[(b)] There exists a $\mathcal{Q} > 0$ and $\alpha_{\ell} > 0$, for every ${\ell} = 1,\dots,m$, satisfying the LMI given in Eq. \eqref{eq_mss_2lmi}.
\item[(c)] $\rho ( \tilde{G} \tilde{\Sigma} )<1,$ 
\vspace{0.05 in}
where $\rho$ stands for the spectral radius of a matrix and $\tilde{\Sigma} = {\rm diag} (\sigma_1^2,\cdots, \sigma_m^2)$, 
\[\mathsmaller {\tilde{G} = \begin{bmatrix}\parallel G_{11}\parallel_2^2&\ldots &\parallel G_{1m}\parallel_2^2\\\vdots&\ddots&\vdots\\\parallel G_{m1}\parallel_2^2&\ldots&\parallel G_{mm}\parallel_2^2 \end{bmatrix}.}\]
\end{enumerate}
Further, for $\sigma_1^2 = \cdots = \sigma_m^2 = \sigma^2$,  the feedback interconnection is mean square exponentially stable if and only if \[\sigma^2 \inf_{\theta > 0, \theta-\text{diag}} \parallel \theta^{-1} \mathbb{G} \theta \parallel_{MS}^2 < 1.\]
\label{thm_mss_eq_conds}
\end{theorem}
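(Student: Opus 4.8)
The plan is to prove the three-way equivalence by chaining the results already in hand and then reducing everything to the spectral theory of entrywise-nonnegative matrices. The equivalence (a) $\iff$ (b) is immediate and requires no new work: Theorem \ref{thm_mss_lyap_eq} states that mean square exponential stability holds if and only if there is a $P>0$ solving the Lyapunov inequality \eqref{eq_mss_lyap}, while Lemma \ref{lemma_mss_Q} states that \eqref{eq_mss_lyap} is solvable if and only if a pair $(\mathcal{Q},\{\alpha_\ell\})$ satisfying \eqref{eq_mss_2lmi} exists. All of the real content lies in (b) $\iff$ (c) and in the scalar-variance specialization.

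For (b) $\iff$ (c) the bridge is the controllability Gramian. First I would introduce, for each channel $\ell$, the matrix $W_\ell\ge 0$ solving $AW_\ell+W_\ell A^\top+B_\ell B_\ell^\top=0$ (well defined since $A$ is Hurwitz) and record the standard $\mathcal{H}_2$ identity $C_i W_\ell C_i^\top=\|G_{i\ell}\|_2^2=\tilde G_{i\ell}$. Then, for fixed weights $\alpha_\ell>0$, the solution $\hat{\mathcal{Q}}=\sum_\ell \alpha_\ell W_\ell$ of $A\hat{\mathcal{Q}}+\hat{\mathcal{Q}}A^\top+\sum_\ell \alpha_\ell B_\ell B_\ell^\top=0$ satisfies $C_\ell\hat{\mathcal{Q}}C_\ell^\top=\sum_j \tilde G_{\ell j}\alpha_j=(\tilde G\alpha)_\ell$, so the second line of \eqref{eq_mss_2lmi} reads $\alpha_\ell>\sigma_\ell^2(\tilde G\alpha)_\ell$, i.e. $\alpha>\tilde\Sigma\tilde G\alpha$ componentwise. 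Using monotonicity of Lyapunov solutions in the Hurwitz case (the strict first line of \eqref{eq_mss_2lmi} forces $\mathcal{Q}>\hat{\mathcal{Q}}$, while conversely any positive $\alpha$ satisfying the componentwise inequality yields a feasible strict pair by solving the equality-Gramian perturbed with a small $\epsilon I$ and invoking continuity of $\mathcal{Q}\mapsto C_\ell\mathcal{Q}C_\ell^\top$), I would show that feasibility of \eqref{eq_mss_2lmi} is exactly equivalent to the existence of a positive vector $\alpha$ with $\alpha>\tilde\Sigma\tilde G\alpha$. Since $\tilde\Sigma\tilde G$ is entrywise nonnegative, the Perron--Frobenius/M-matrix dichotomy gives that such $\alpha$ exists if and only if $\rho(\tilde\Sigma\tilde G)<1$; finally $\rho(\tilde\Sigma\tilde G)=\rho(\tilde G\tilde\Sigma)$ because $\rho(XY)=\rho(YX)$, which is precisely (c).

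For the scalar-variance statement I would set $\tilde\Sigma=\sigma^2 I$, so that (c) reads $\sigma^2\rho(\tilde G)<1$, and then identify the optimized mean square norm with the Perron root. Since $\theta$ is diagonal, the $(i,j)$ transfer function of $\theta^{-1}\mathbb{G}\theta$ is $\theta_{ii}^{-1}G_{ij}\theta_{jj}$, so with $d_i:=\theta_{ii}^2$ the norm of Definition \ref{def_msn} becomes $\|\theta^{-1}\mathbb{G}\theta\|_{MS}^2=\max_i d_i^{-1}\sum_j \tilde G_{ij}d_j=\max_i d_i^{-1}(\tilde G d)_i$. The Collatz--Wielandt min-max characterization of the spectral radius of the nonnegative matrix $\tilde G$ gives $\inf_{d>0}\max_i d_i^{-1}(\tilde G d)_i=\rho(\tilde G)$, hence $\inf_{\theta>0,\ \theta\text{-diag}}\|\theta^{-1}\mathbb{G}\theta\|_{MS}^2=\rho(\tilde G)$, and the asserted test $\sigma^2\inf_\theta\|\theta^{-1}\mathbb{G}\theta\|_{MS}^2<1$ coincides with $\sigma^2\rho(\tilde G)<1$.

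I expect the main obstacle to be the careful handling of strict versus non-strict inequalities in the (b) $\iff$ (c) step: passing between the strict LMI \eqref{eq_mss_2lmi} and the equality Gramian requires monotonicity and continuity of Lyapunov solutions together with an $\epsilon$-perturbation argument, and the nonnegative-matrix conclusion must accommodate a possibly reducible $\tilde\Sigma\tilde G$, so the separating positive vector is best produced as $\alpha=(I-\tilde\Sigma\tilde G)^{-1}\mathbf{1}\ge \mathbf{1}>0$ rather than from a single Perron eigenvector. The $\mathcal{H}_2$/Gramian identities and the min-max characterization of $\rho(\tilde G)$ are classical and should go through routinely.
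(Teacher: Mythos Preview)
Your proposal is correct and follows essentially the same route that the paper sketches: for (a)\,$\Leftrightarrow$\,(b) you invoke Theorem~\ref{thm_mss_lyap_eq} and Lemma~\ref{lemma_mss_Q} just as the paper does, and for (b)\,$\Leftrightarrow$\,(c) the paper merely says ``distributing the system to single input single output systems and using the spectral radius definition of nonnegative matrices,'' which is precisely the Gramian-plus-Perron--Frobenius argument you spell out in full (your identification $C_i W_\ell C_i^\top=\tilde G_{i\ell}$ and the componentwise reduction $\alpha>\tilde\Sigma\tilde G\alpha$ are the intended details behind that one-line remark).

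The one genuine difference is in the scalar-variance corollary. The paper derives it from Lemma~\ref{lemma_lmi_mss} by setting $\theta_{\ell\ell}=\sqrt{\alpha_\ell}$, so that the two inequalities of \eqref{eq_mss_2lmi} with $\sigma_\ell^2=\sigma^2$ become exactly the feasibility conditions in the LMI characterization of $\|\theta^{-1}\mathbb{G}\theta\|_{MS}^2$ with value $\gamma=1/\sigma^2$; feasibility of (b) is therefore equivalent to the existence of a diagonal $\theta>0$ with $\sigma^2\|\theta^{-1}\mathbb{G}\theta\|_{MS}^2<1$, hence to $\sigma^2\inf_\theta\|\theta^{-1}\mathbb{G}\theta\|_{MS}^2<1$. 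You instead bypass Lemma~\ref{lemma_lmi_mss} entirely, compute $\|\theta^{-1}\mathbb{G}\theta\|_{MS}^2=\max_i d_i^{-1}(\tilde G d)_i$ directly from Definition~\ref{def_msn}, and apply the Collatz--Wielandt formula $\inf_{d>0}\max_i d_i^{-1}(\tilde G d)_i=\rho(\tilde G)$. Your route is more elementary and makes the link to $\rho(\tilde G)$ explicit; the paper's route has the virtue of reusing the LMI machinery already in place and avoids any discussion of reducibility in Collatz--Wielandt. Both are valid.
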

\vspace{-0.3 cm}
\begin{proof}
$(a) \Leftrightarrow (b)$
This follows by combining the results from Theorem \ref{thm_mss_lyap_eq} and Lemma \ref{lemma_mss_Q}.\\ 
$(b) \Leftrightarrow (c)$
This result follows by distributing the system to single input single output systems and using the spectral radius definition of nonnegative matrices discussed in \cite{Horn_John}. 

In the special case of all variances to be the same, the result follows from Lemma \ref{lemma_lmi_mss} and by choosing $\theta = {\rm diag} (\sqrt{\alpha_1}, \sqrt{\alpha_2}, \dots, \sqrt{\alpha_m}).$
\end{proof}

Here, the mean square norm is computed for the transformed system $\theta^{-1} \mathbb{G} \theta$. The scaling factor, $\theta$, ensures the mean square norm with respect to all inputs and outputs is same. Hence, for a SISO system, the scaling factor, $\theta$, does not come into play. Moreover, in the case for a SISO system, the mean square norm is equal to the standard ${\cal H}_2$ norm.
\begin{remark} The equivalent condition (c) from Theorem \ref{theorem_stability} can be used to determine the maximum tolerable variance of uncertainty $\sigma^*$ above, which the feedback interconnection will be mean square exponentially unstable. In particular, the critical $\sigma^*$ is given by
$$\sigma^*=\frac{1}{\sqrt{\inf_{\theta > 0, \theta-\text{diag}} \parallel \theta^{-1} \mathbb{G} \theta \parallel_{MS}^2}}.$$
\label{remark_critical_variance}
\end{remark}
The results derived until here provide a framework for determining the largest variance of channel uncertainty. However, the variance value itself  must be computed numerically. We do not have the analytical expression for the largest variance value expressed in terms of characteristics of the open-loop system dynamics. Later, in the simulation section, we show the variance value is a function of both the open-loop unstable poles and zeros. Therefore, in the next section, for a special class of systems, single input system with full state feedback, we show analytically, the maximum variance that can be tolerated by the nominal system with state feedback controller. 
\subsection{Fundamental limitations in a single input case}
\label{limitations}
In this section, we discuss the fundamental limitations in the mean square stabilization for a special case, single input system with full state feedback. The channel uncertainty is assumed at input side only. With single uncertainty in the feedback loop, the mean square system norm is reduced to standard ${\cal H}_2$ norm. Furthermore, using the standard results from robust control theory \cite{rotea1993_h2problem}, we know using the full state feedback measurements,  the optimal ${\cal H}_2$ performance obtained from static and dynamic controllers are the same. Hence, to find the controller giving optimal ${\cal H}_2$ norm, it is enough to restrict the search to the class of static controllers.
With some abuse of notation, we write the single-input LTI system with input channel uncertainty as follows.
\begin{equation}
\begin{array}{ll}
&\dot x=A_ox+ B \hat{u},\; \hat{u}=(\mu+\sigma \xi)v \\
&v=Kx,
\end{array}
\label{fund_lim}
\end{equation}
where $\hat u\in \mathbb{R}$, $\mu \neq 0$ and $\sigma$ are the mean and standard deviation of the white noise process, $\xi = \frac{d\Delta}{dt}$, with $\Delta$ being the standard Wiener process.  System matrix, $A_o$ correspond to the open-loop system. We now make the following assumption. 
\begin{assumption}\label{assume_unstable} Assume all the eigenvalues of $A_o$ are in the right-half plane, i.e., $-A_o$ is Hurwitz and the pair $(A_o,B)$ is stabilizable. 
 \end{assumption}
Since, $A_o$ has all eigenvalues on the right hand side, the stabilizability of pair $(A_o,B)$  is equivalent to controllability of $(A_o,B)$. Further, the pair $(A_o, \mu B)$ is also controllable, and there exists a stabilizing controller $K$ such that $A:=A_o+\mu B K$ is Hurwitz.
The objective here is to design a state feedback controller, so the closed-loop system is mean square exponentially stable with maximum tolerable variance, $\sigma^2_*$. 
\begin{theorem}\label{theorem_fund}
Consider the stabilization problem for single-input full state feedback LTI system with channel uncertainty at the input side shown in Eq. \eqref{fund_lim}. Under Assumption \ref{assume_unstable}, system (\ref{fund_lim}) is mean square exponentially stable, if and only if,
$$2 \frac{\sigma^2}{\mu^2} \sum_i \lambda_i(A_o) <  1.$$
\label{them_fund_lim}
\end{theorem}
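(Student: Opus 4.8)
The plan is to collapse the single-input problem into a scalar $\mathcal{H}_2$-norm minimization and then evaluate that minimum in closed form from the Riccati/Hamiltonian structure of the all-unstable plant. First I would substitute $v=Kx$ and $\hat u=(\mu+\sigma\xi)Kx$ into \eqref{fund_lim} to get $\dot x = Ax + \sigma BKx\,\xi$ with $A:=A_o+\mu BK$ Hurwitz, which is exactly a one-channel instance of \eqref{eq_system_unc} with $B_1=B$, $C_1=K$ and $\sigma_1=\sigma$. With a single uncertain channel the scaling $\theta$ in Theorem \ref{theorem_stability} is inactive and the mean square norm coincides with the ordinary $\mathcal{H}_2$ norm, so Theorem \ref{theorem_stability}(c) reads: the closed loop is mean square exponentially stable iff $\sigma^2\|G_K\|_2^2<1$, where $G_K(s)=K(sI-A)^{-1}B$ is the transfer function from $w$ to $z=Kx$. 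Since $K$ is a design variable, mean square stabilization is achievable iff $\sigma^2\inf_K\|G_K\|_2^2<1$, and the whole task reduces to evaluating $\inf_K\|G_K\|_2^2$.

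Next I would factor out the mean gain by the change of variables $F:=\mu K$, giving $\inf_K\|G_K\|_2^2=\frac{1}{\mu^2}\inf_F\|F(sI-(A_o+BF))^{-1}B\|_2^2$ over stabilizing $F$. Reading the $\mathcal{H}_2$ norm as impulse-response energy, an impulse in $w$ sets $x(0^+)=B$ and the cost becomes $\int_0^\infty |u(t)|^2\,dt$ for the control $u=Fx$ steering $x(0)=B$ to the origin under $\dot x=A_o x+Bu$. Because the $\mathcal{H}_2$-optimal full-state controller may be taken static (as already granted in the text preceding the statement), this infimum equals the infinite-horizon minimum-energy cost, which by the standard LQ argument is $B^\top P B$, where $P\ge 0$ is the stabilizing solution of the Riccati equation $A_o^\top P + PA_o - PBB^\top P=0$ and $A_{cl}:=A_o-BB^\top P$ is Hurwitz. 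Controllability of $(A_o,B)$, which under Assumption \ref{assume_unstable} is equivalent to its stabilizability, guarantees existence and uniqueness of this solution.

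The crux is to show $B^\top P B = 2\sum_i\lambda_i(A_o)$. I would write $B^\top P B=\trace(BB^\top P)=\trace(A_o)-\trace(A_{cl})$ and read off the closed-loop spectrum from the Hamiltonian matrix of the $Q=0$ Riccati equation,
\[H=\begin{bmatrix} A_o & -BB^\top \\ 0 & -A_o^\top \end{bmatrix},\]
which, being block triangular, has spectrum $\lambda(A_o)\cup(-\lambda(A_o))$. The stabilizing solution assigns the stable half of this spectrum to $A_{cl}$; since Assumption \ref{assume_unstable} places every eigenvalue of $A_o$ strictly in the open right-half plane, the stable eigenvalues are exactly $-\lambda_i(A_o)$, so $\trace(A_{cl})=-\trace(A_o)$ and $B^\top P B=2\,\trace(A_o)=2\sum_i\lambda_i(A_o)$. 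Substituting back gives $\inf_K\|G_K\|_2^2=\frac{2}{\mu^2}\sum_i\lambda_i(A_o)$, and the stabilizability condition $\sigma^2\inf_K\|G_K\|_2^2<1$ becomes precisely $2\frac{\sigma^2}{\mu^2}\sum_i\lambda_i(A_o)<1$.

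The main obstacle is this eigenvalue-reflection step: correctly identifying which half of the Hamiltonian spectrum is selected by the stabilizing Riccati solution and invoking the all-right-half-plane hypothesis to pin down $\trace(A_{cl})=-\trace(A_o)$. Everything else — the reduction to $\sigma^2\inf_K\|G_K\|_2^2<1$, the reparametrization $F=\mu K$, and the identification with a minimum-energy control cost — is routine once the static-controller reduction is in hand.
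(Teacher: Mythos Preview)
Your proof is correct and follows the same high-level arc as the paper: reduce to the single-channel condition $\sigma^2\inf_K\|G_K\|_2^2<1$, rescale by $\mu$, and identify the optimum with $B^\top PB$ for the stabilizing solution of the $Q=0$ Riccati equation. The two arguments diverge only at the evaluation of $B^\top PB$. The paper computes it by pure algebra: pre- and post-multiplying the Riccati identity $A_o^\top P + PA_o - \mu^2 PBB^\top P = 0$ by $P^{-1/2}$ and taking traces yields $2\,\trace(A_o)=\mu^2 B^\top PB$ immediately, with no reference to the closed-loop spectrum. Your Hamiltonian route is slightly longer but more structural: writing $B^\top PB=\trace(A_o)-\trace(A_{cl})$ and reading $\trace(A_{cl})=-\trace(A_o)$ off the mirrored spectrum of the block-triangular Hamiltonian makes explicit why Assumption~\ref{assume_unstable} (every eigenvalue of $A_o$ in the open right half-plane) is exactly the hypothesis required---if $A_o$ had any stable eigenvalues, the stabilizing selection would not reflect them and the identity would break. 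The paper's computation is shorter and needs less Riccati machinery; yours explains where the all-unstable assumption enters.
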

\begin{proof} 
The system \eqref{fund_lim} with plant, state-feedback controller and uncertain input channel are written in closed-loop form as 
\begin{align}
\dot{x} = & A x + \sigma B K x \xi. \label{cl_fund}
\end{align}
This closed-loop system \eqref{cl_fund} can be further written as a SISO system with nominal system $\mathbb{G}$ and stochastic uncertainty, $\xi$ in the feedback. The system matrices of the nominal system, $\mathbb{G}$ are   
$ \begin{pmatrix}
A_o+\mu B K & B \\ K & 0
\end{pmatrix}.$
%
This follows by noticing that the disturbance ($w$) and control ($z$) signals are given by $w = \xi z$ and $z = K x$ respectively.
We recall that the mean square norm for SISO system \eqref{cl_fund} is equivalent to $\mathcal{H}_2$ norm of the system \eqref{cl_fund}. Based on this, we first show the necessity part. 

\textit{Necessity:} From Theorem \ref{theorem_stability}, we know that the necessary condition for mean square exponential stability is, $\sigma^2 \parallel \mathbb{G} \parallel_2^2 < 1$.  The $\mathcal{H}_2$ norm of $\mathbb{G}$, i.e., $\parallel \mathbb{G} \parallel_2$, is given by $B^{\top} P B$ where $P > 0$, and is obtained from 
\begin{align}
\mathsmaller{
(A_o+\mu B K)^{\top} P + P (A_o+\mu B K) + K^{\top} K = 0. }
\label{eq_mss_siso}
\end{align}
Now, the optimal $K$ satisfying Eq. \eqref{eq_mss_siso} is obtained by minimizing the left hand side (lhs) of Eq. \eqref{eq_mss_siso}, i.e., by taking the derivative of lhs of Eq. \eqref{eq_mss_siso} w.r.t $K$ and equating it to zero which yields, $K = -\mu B^{\top} P$. Using this optimal $K$ in Eq. \eqref{eq_mss_siso}, we obtain, 
$$A_o^{\top} P + P A_o - \mu^2 P B B^{\top} P = 0.$$
Further, we rewrite this equation by multiplying and dividing the last term with $ \sigma^2 B^{\top} P B$ to obtain 
\begin{align}
\mathsmaller{A_o^{\top} P + P A_o - \frac{\mu^2}{\sigma^2} \frac{P B B^{\top} P}{B^{\top} P B}  \sigma^2 B^{\top} P B = 0. }
\label{eq_mss_siso1}
\end{align} 
Now, using the given relation, $\sigma^2 B^{\top} P B < 1$ from mean square exponential stability, we can rewrite Eq. \eqref{eq_mss_siso1} as 
\begin{align}
\mathsmaller{A_o^{\top} P + P A_o - \frac{\mu^2}{\sigma^2} \frac{P B B^{\top} P}{B^{\top} P B} < 0.}
\label{eq_mss_siso2}
\end{align}
Since, $P > 0$, pre and post multiplying Eq. \eqref{eq_mss_siso2} by $P^{-\frac{1}{2}}$ on both sides, we obtain, 
\begin{align}
\mathsmaller{P^{-\frac{1}{2}} A_o^{\top} P^{\frac{1}{2}} + P^{\frac{1}{2}} A_o P^{-\frac{1}{2}} - \frac{\mu^2}{\sigma^2} \frac{P^{\frac{1}{2}} B B^{\top} P^{\frac{1}{2}}}{B^{\top} P B} < 0.}
\label{eq_mss_siso4}
\end{align}
Now, taking trace on both sides of Eq. \eqref{eq_mss_siso4} and using the properties of trace, we have, $2\text{tr}(A_o) < \frac{\mu^2}{\sigma^2}$. 

\textit{Sufficiency:} It is enough to show, $\sigma^2 B^{\top} P B < 1$, where $P>0$ satisfies Eq. (\ref{eq_mss_siso}).
Consider Eq. \eqref{eq_mss_siso} and choose $K = -\mu B^{\top} P$ to rewrite Eq. \eqref{eq_mss_siso} as 
\begin{align}
\mathsmaller{A_o^{\top} P + P A_o - {\mu^2} \frac{P B B^{\top} P}{B^{\top} P B} B^{\top} P B = 0.}
\label{eq_mss_siso3}
\end{align}
Pre and post multiplying Eq. \eqref{eq_mss_siso3} by $P^{-\frac{1}{2}}$ on both sides and then taking trace on both sides, we obtain,
\begin{align}
2tr(A_o) - \mu^2 tr(B^{\top} P B) = 0. 
\label{eq_mss_siso5}
\end{align}
Given, $2 \frac{\sigma^2}{\mu^2} tr(A_o) < 1$. Then, Eq. \eqref{eq_mss_siso5} simplifies to $\mu^2 B^{\top} P B < \frac{\mu^2}{\sigma^2}$ and hence the result follows.
\end{proof}

\section{Mean Square Controller synthesis}
\label{controller}
In this section, we tackle the controller synthesis problem for the closed-loop system, $ {\cal F}(\mathbb{G},\frac{d\Delta}{dt})$. The controller is designed such that the closed-loop system can tolerate maximum uncertainty. Using part $(c)$ of Theorem \ref{thm_mss_eq_conds} and Lemma \ref{lemma_lmi_mss}, we pose the controller synthesis problem as an LMI-based optimization problem,
\[\inf_{\mathbb{K}-stab, LTI} \inf_{{\theta}>0,diag} \parallel {\theta}^{-1} {\cal F}(\mathbb{P},\mathbb{K}) {\theta} \parallel_{MS}^2.\]
Moreover, the designed $\mathbb{K}$ satisfies Assumption \ref{assumptions}a) which is, the nominal system $\mathbb{G} = {\cal F}(\mathbb{P},\mathbb{K})$ is internally stable.  

This optimization provides a robust optimal controller by searching in the space of linear time-invariant stabilizing controllers that minimizes the mean square norm. However, searching for a robust optimal controller is a nonconvex problem. This problem can be made convex by following the approach given in \cite{Scherer_LMI} along with fixing the variable $\theta$. Later, in simulations, we solve the optimization problem for the controller by keeping the variable $\theta$ constant.
The resultant controller formulation is given in the ensuing theorem.

\begin{theorem}
Given a plant $\mathbb{P}$ and for any ${\theta} > 0$, the optimization problem: \[\inf_{\mathbb{K}-stab, LTI} \parallel {\theta}^{-1} \mathbb{G} {\theta} \parallel_{MS}^2\]
is equivalent to the following LMI optimization:
\begin{align*}
& \inf_{{\textbf{X},\textbf{Y},\textbf{S},\hat{\textbf{A}},\hat{\textbf{B}}, \hat{\textbf{C}}, \gamma}} \gamma \\
& \text{subject to} \;\  S_{{\ell}{\ell}} < \gamma, \quad {\ell} = 1,2,\dots, m, \\
&  { \left[\begin{array}{ccc}
\begin{matrix}
\mathcal{L}_1(\textbf{X},\hat{\textbf{C}}) \\
\hat{\textbf{A}} + A_p^{\top}  
\end{matrix} & \begin{matrix}
\hat{\textbf{A}}^{\top} + A_p \\
\mathcal{L}_2(\textbf{Y},\hat{\textbf{B}}) 
\end{matrix} & \bigg(\begin{matrix}
0  & B_p \\
\hat{\textbf{B}} & \textbf{Y} B_p
\end{matrix}\bigg)  {\theta} \\
 {\theta^{\top}} \bigg(\begin{matrix}
0 \\
B_p^{\top} 
\end{matrix} & \begin{matrix}
\hat{\textbf{B}}^{\top}  \\
(\textbf{Y} B_p)^{\top}
\end{matrix}\bigg) &
 \begin{matrix}
{-I}
\end{matrix}
\end{array} \right]} < 0, \\
&  { \left[ \begin{array}{ccc}
{\theta} \mathcal{\textbf{S}} {\theta} & \begin{matrix}
C_p \textbf{X}  \\ \hat{\textbf{C}}
\end{matrix} & \begin{matrix}
C_p \\
0
\end{matrix} \\
\begin{matrix}
(C_p \textbf{X})^{\top}  & \hat{\textbf{C}}^{\top} \\  C_p^{\top} & 0
\end{matrix} & \begin{matrix}
\textbf{X} \\ I
\end{matrix} & \begin{matrix}
I \\ \textbf{Y}
\end{matrix}
\end{array}\right]} > 0, 
\end{align*}
where $\textbf{X}, \textbf{Y}, \textbf{S}$ are positive definite symmetric matrices of size $n \times n, n \times n, m \times m$, and $\hat{\textbf{A}}, \hat{\textbf{B}}, \hat{\textbf{C}}$ are matrices of sizes $n\times n, n \times q, d \times n$ correspondingly. Furthermore, \[\mathcal{L}_1(\textbf{X},\hat{\textbf{C}}) = A_p \textbf{X} + \textbf{X} A_p^{\top} + B_p \Lambda_I \hat{\textbf{C}}+ (B_p \Lambda_I \hat{\textbf{C}})^{\top}\] \[\text{and} \;\; \mathcal{L}_2(\textbf{Y},\hat{\textbf{B}}) = A_p^{\top} \textbf{Y} + \textbf{Y} A_p + \hat{\textbf{B}} \Lambda_O C_p + (\hat{\textbf{B}} \Lambda_O C_p)^{\top}.\]  
A feasible solution to the above optimization is a controller of the order of the plant, $\mathbb{P}$. Then, the system matrices of the controller can be uniquely obtained as follows: 
\begin{align*}
C_k = & \hat{\textbf{C}} (M^{\top})^{-1},  \\
B_k =  & N^{-1} \hat{\textbf{B}},  \\
A_k =  & N^{-1}(\hat{\textbf{A}} - \textbf{Y} A_p \textbf{X} - N B_k \Lambda_O C_p \textbf{X}  - \textbf{Y} B_p \Lambda_I C_k M^{\top})(M^{\top})^{-1},
\end{align*}
where $M, N$ are invertible matrices satisfying
$ N M^{\top} = I - \textbf{Y}\textbf{X}$. One possible choice for $N$ is $N N^{\top} = \textbf{Y} - \textbf{X}^{-1}$ and $M$, such that \[\begin{pmatrix}
\textbf{Y} & N \\ N^{\top} & I
\end{pmatrix} \begin{pmatrix}
\textbf{X} & M \\ M^{\top} & *
\end{pmatrix} = \begin{pmatrix}
I & 0 \\ 0 & I
\end{pmatrix}. \]
\label{thm_controller_ip_op}
\end{theorem}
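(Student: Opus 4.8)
The plan is to start from the analysis characterization of the mean square norm in Lemma~\ref{lemma_lmi_mss} and convert the resulting controller synthesis problem into the stated LMIs by the linearizing change of variables of \cite{Scherer_LMI}. Applying Lemma~\ref{lemma_lmi_mss} to the closed-loop nominal system $\mathbb{G}=\mathcal{F}(\mathbb{P},\mathbb{K})$ with state-space data
\[A=\begin{bmatrix}A_p& B_p \Lambda_I C_k \\B_k \Lambda_O C_p& A_k \end{bmatrix},\quad B= \begin{bmatrix}0& B_p\\B_k&0\end{bmatrix},\quad C={\rm diag}(C_p,C_k),\]
produces two matrix inequalities in the Lyapunov variable $\mathcal{P}>0$, the slack $\mathcal{S}$ and the scalar $\gamma$. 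These are bilinear in $\mathcal{P}$ and the unknown controller data $(A_k,B_k,C_k)$, hence nonconvex, and the whole point is to eliminate this bilinearity while preserving equivalence.

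Next I would partition the Lyapunov matrix and its inverse conformably with $x=(x_p^\top,x_k^\top)^\top$ as
\[\mathcal{P}=\begin{bmatrix}\textbf{Y}&N\\N^\top&*\end{bmatrix},\qquad \mathcal{P}^{-1}=\begin{bmatrix}\textbf{X}&M\\M^\top&*\end{bmatrix},\]
so that $\textbf{Y}\textbf{X}+NM^\top=I$, i.e. $NM^\top=I-\textbf{Y}\textbf{X}$. Introducing
\[\Pi_1=\begin{bmatrix}\textbf{X}&I\\M^\top&0\end{bmatrix},\qquad \Pi_2=\begin{bmatrix}I&\textbf{Y}\\0&N^\top\end{bmatrix},\]
one checks $\mathcal{P}\Pi_1=\Pi_2$ and $\Pi_1^\top\mathcal{P}\Pi_1=\Pi_1^\top\Pi_2=\begin{bmatrix}\textbf{X}&I\\I&\textbf{Y}\end{bmatrix}$. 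I then define the change of variables
\[\hat{\textbf{C}}=C_kM^\top,\quad \hat{\textbf{B}}=NB_k,\quad \hat{\textbf{A}}=\textbf{Y}A_p\textbf{X}+NB_k\Lambda_OC_p\textbf{X}+\textbf{Y}B_p\Lambda_IC_kM^\top+NA_kM^\top,\]
and perform the congruence ${\rm diag}(\Pi_1,I)$ on the first analysis inequality and ${\rm diag}(I,\Pi_1)$ on the second. Using $\Pi_1^\top\mathcal{P}=\Pi_2^\top$, the state block of the first inequality becomes $\Pi_2^\top A\Pi_1+(\Pi_2^\top A\Pi_1)^\top$; a block computation shows its diagonals are exactly $\mathcal{L}_1(\textbf{X},\hat{\textbf{C}})$ and $\mathcal{L}_2(\textbf{Y},\hat{\textbf{B}})$ and its off-diagonals $\hat{\textbf{A}}^\top+A_p$, $\hat{\textbf{A}}+A_p^\top$, while the coupling block is $\Pi_2^\top B\theta=\begin{bmatrix}0&B_p\\\hat{\textbf{B}}&\textbf{Y}B_p\end{bmatrix}\theta$. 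Likewise $C\Pi_1=\begin{bmatrix}C_p\textbf{X}&C_p\\\hat{\textbf{C}}&0\end{bmatrix}$ and $\Pi_1^\top\mathcal{P}\Pi_1=\begin{bmatrix}\textbf{X}&I\\I&\textbf{Y}\end{bmatrix}$ turn the second inequality into the claimed form. Because $\Pi_1$ is invertible, the congruence is a genuine equivalence, so feasibility of the LMIs with value $\gamma$ is equivalent to achievability of $\parallel\theta^{-1}\mathbb{G}\theta\parallel_{MS}^2<\gamma$ by some stabilizing $\mathbb{K}$.

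Finally I would recover the controller. Feasibility of the second LMI forces $\begin{bmatrix}\textbf{X}&I\\I&\textbf{Y}\end{bmatrix}>0$, equivalently $\textbf{X}>0$ and $\textbf{Y}>\textbf{X}^{-1}$; this permits the factorization $NN^\top=\textbf{Y}-\textbf{X}^{-1}$ with $N$ invertible and then $M$ from $NM^\top=I-\textbf{Y}\textbf{X}$, which is nonsingular precisely because $\textbf{Y}>\textbf{X}^{-1}$ makes $I-\textbf{Y}\textbf{X}$ invertible. Inverting the change of variables yields the stated formulas $C_k=\hat{\textbf{C}}(M^\top)^{-1}$, $B_k=N^{-1}\hat{\textbf{B}}$, and $A_k=N^{-1}(\hat{\textbf{A}}-\textbf{Y}A_p\textbf{X}-NB_k\Lambda_OC_p\textbf{X}-\textbf{Y}B_p\Lambda_IC_kM^\top)(M^\top)^{-1}$, giving a controller of the plant's order. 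The hard part is the block-by-block verification that the congruence together with the change of variables reproduces exactly the claimed affine expressions: the single linear variable $\hat{\textbf{A}}$ must simultaneously absorb all four bilinear terms $NA_kM^\top$, $NB_k\Lambda_OC_p\textbf{X}$, $\textbf{Y}B_p\Lambda_IC_kM^\top$ and $\textbf{Y}A_p\textbf{X}$ in both off-diagonal positions, and the mean parts $\Lambda_I,\Lambda_O$ must land inside $\mathcal{L}_1,\mathcal{L}_2$ as written. The companion well-posedness issue of guaranteeing $M,N$ invertible is what makes the strict inequality $\textbf{Y}>\textbf{X}^{-1}$, and hence the precise form of the second LMI, essential to the reconstruction.
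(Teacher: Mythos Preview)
Your proposal is correct and follows exactly the approach the paper indicates: the paper's own proof is a single line stating that the result follows from Lemma~\ref{lemma_lmi_mss} together with the congruence transformation and linearizing change of variables of \cite{Scherer_LMI}, which is precisely what you have spelled out in detail. Your explicit block computations for $\Pi_1^\top\mathcal{P}A\Pi_1$, $\Pi_2^\top B\theta$, $C\Pi_1$ and the reconstruction of $(A_k,B_k,C_k)$ simply fill in what the paper leaves to the reference.
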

\begin{proof}
The result follows from Lemma \ref{lemma_lmi_mss} and applying congruence transformation as shown in \cite{Scherer_LMI}.
\end{proof}

A similar result on controller synthesis in the case of a discrete-time system with uncertainty in feedback communication channels is given in \cite{scl04}. 
Furthermore, in \cite{scl04}, the author briefly mentions different ways to approach this type of nonconvex problem. One of the ways to solve the controller synthesis problem is by applying sub-optimal methods, such as the D-K iteration \cite{Dullerud_Paganini}. In this approach, first $\theta$ is fixed to solve for the controller matrices and then $\theta$ is updated by keeping the controller matrices constant. This process is continued until the update equation for $\theta$ converges. In general, this approach does not guarantee a global optimal controller, but can always provide a local optimal controller.  The D-step formulation in the D-K iteration is given as follows.
\begin{align*}
& \inf_{\theta} \qquad 1 \\
& \text{subject to}  \\
& \left[\begin{array}{ccc}
\begin{matrix}
\mathcal{L}_1({X},\hat{C}) \\
\hat{{A}} + A_p^{\top}  
\end{matrix} & \begin{matrix}
\hat{{A}}^{\top} + A_p \\
\mathcal{L}_2({Y},\hat{B}) 
\end{matrix} & \bigg(\begin{matrix}
0  & B_p \\
\hat{{B}} & {Y} B_p
\end{matrix}\bigg) {\theta} \\
{\theta}^{\top} \bigg(\begin{matrix}
0 \\
B_p^{\top} 
\end{matrix} & \begin{matrix}
\hat{{B}}^{\top}  \\
({Y} B_p)^{\top}
\end{matrix}\bigg) &
 \begin{matrix}
{-I}
\end{matrix}
\end{array} \right] < 0, \\
& \left[ \begin{array}{cc}
\begin{pmatrix}
C_p {X} & C_p \\ \hat{C} & 0
\end{pmatrix} \begin{pmatrix}
X & I \\ I & Y
\end{pmatrix}^{-1} \begin{pmatrix}
(C_p {X})^{\top}  & \hat{C}^{\top} \\  C_p^{\top} & 0
\end{pmatrix} & \theta \\
\theta^{\top} & \mathcal{S}^{-1}
\end{array} \right] < 0.
\end{align*}
In the D-step of D-K iteration, only $\theta$ is the optimization variable.

We remark that dealing with fixed-order controllers is a difficult problem. The controller matrices can be extracted easily only when the controller is of the size of the plant. Similar formulations can be seen in \cite{scl04} and \cite{Scherer_LMI}. 

Designing a static output feedback controller in general even for a deterministic system is a hard problem hence we expect that designing a mean square stabilizing output feedback controller will be a difficult problem. However, we agree that this will be an interesting problem to study in our future research. 

In the subsection involving fundamental limitations result on a single input full state feedback case, we have designed a static state feedback controller. In this case, we applied the standard result from the robust control theory \cite{rotea1993_h2problem}, where the optimal ${\cal H}_2$ performance obtained from static and dynamic controllers are the same for systems with full state feedback.

\section{Simulation}
\label{simulation}
In this section, we consider a power network and demonstrate the application of the proposed results, especially Theorem \ref{thm_mss_eq_conds}. A WSCC 9-bus test system is chosen for the study. The WSCC 9-bus system (refer Fig. \ref{fig_9bus}) with nine buses and three generators is an equivalent representation of the Western System Coordinating Council (WSCC). 

Consider the structure preserving power network model \cite{Bergen_structure} consisting of a linearized swing equation. The algebraic states are eliminated at the load buses  to obtain the resultant dynamic model as shown in Eq. \eqref{plant}.  
The states, $x_p = \begin{bmatrix}
\delta & \omega
\end{bmatrix}^{\top}$, where $\delta=\begin{bmatrix}\delta_1&\delta_2&\delta_3\end{bmatrix}^\top$ and $\omega=\begin{bmatrix}\omega_1&\omega_2&\omega_3\end{bmatrix}^\top$ are the generator rotor angles and frequencies respectively. The state and input matrices are defined as, 
\begin{align*}
A_p = \begin{bmatrix}
0 & I \\
-M^{-1}\hat{L} & -M^{-1}D
\end{bmatrix}, \;  & B_p = \begin{bmatrix}
0 & 0 \\ 0 & M^{-1}
\end{bmatrix},
\end{align*}
where $M$ and $D$ are diagonal matrices and they represent inertia and damping values of the generators respectively. The inputs to the power network is $u_p = \begin{bmatrix}
0 & P_m - P_d
\end{bmatrix}^{\top}$ where $P_m$ is the mechanical input to the generator and $P_d$ is the active load demand. The output matrix $C_p$ is defined based on the observation, i.e., based on which bus frequencies are being monitored. Further, the matrix $\hat L$ is the Kron-reduced matrix of the Laplacian matrix $L$ and they are given by 
\[\mathsmaller{L = \begin{bmatrix}
L_{gg} & L_{gl} \\
L_{lg} & L_{ll}
\end{bmatrix};\; \hat{L} = L_{gg} - L_{gl} L_{ll}^{-1} L_{lg}.}
\]
The elements of the Laplacian matrix corresponding to the power network gives the admittance-weighted interconnections between the generators and load buses. The inertia and damping values at the generators are chosen to be $M = {\rm diag} \{160.64, 12.47, 8.047\}$ and $D = {\rm diag} \{10,10,10\}$. The load values and bus admittance values for the 9-bus system are obtained from \cite{WSCC_data} and the Kron-reduced Laplacian matrix for the considered 9-bus system is 
\begin{equation*}
\mathsmaller{\hat{L} = \begin{bmatrix}
   -4.5375 &   2.4111  &  2.4006 \\
    2.4111  & -4.8367  &  2.9096 \\
    2.4006  &  2.9096  & -4.6931
\end{bmatrix}.}
\end{equation*}
Next, suppose there is frequency deviation from the nominal value due to power imbalance in the network. Then, a control has to be applied at any one of the generators to accordingly change the generation in order  to regulate the frequency. Let's say, a new control input has to be applied at the generator 1 to achieve frequency regulation and the control input to the generator is stochastic. In this scenario, the designed controller will change the set-point as well as robust to the uncertainties. The input matrix corresponding to the generator 1 is given by 
\begin{eqnarray}
\hat{B}_p=\begin{bmatrix} 0 & 0 & 0 & 0.0062 & 0 & 0 \end{bmatrix}^{\top}
\end{eqnarray}
and clearly the pair $(A_p, \hat{B}_p)$ is stabilizable. The resultant closed loop system with plant, controller (as described in Eq. \eqref{controller}) and stochastic uncertainty in the control input to the generator 1 is given by 
\begin{align*}
\begin{bmatrix}
\dot{x}_p \\ \dot{x}_k
\end{bmatrix} = & \begin{bmatrix}
A_p & \hat{B}_p C_k \\ 
\hat{B}_k C_p & A_k
\end{bmatrix} \begin{bmatrix}
x_p \\ x_k
\end{bmatrix} + \sigma \begin{bmatrix}
0 & \hat{B}_p C_k \\ 0 & 0
\end{bmatrix} \begin{bmatrix}
x_p \\ x_k
\end{bmatrix} \xi
\end{align*}
where $\sigma > 0$ and $\xi = \frac{d\Delta}{dt}$ with $\Delta$ being the scalar Wiener process. This closed-loop system can be written as a networked system with uncertainty, and it is a single input single output system with uncertainty (in the feedback). Hence, the design of controller based on the mean square norm minimization is equivalent to the ${\cal H}_2$ norm minimization. The objective here is to understand the role of open-loop poles and zeros on the critical value of stochastic variance, $\sigma^2_*$. Three different choices of output are chosen as described in Table \ref{table1} with ${\cal H}_2$ norm minimization to determine the impact of open-loop zeros on $\sigma^2_*$. For every chosen output matrix, the pair $(A_p, C_p)$ is verified to be detectable. The eigenvalues and hence, the poles of the open-loop system are given as  follows:
\begin{align}
\lambda  \in  \{ -1.6742,  -1.0927,     0.5255,    0.2683,   0, -0.1339\}.
\label{poles}
\end{align}
\begin{table}[h!]
\caption{Effect of non-minimum phase zeros on critical variance ($\sigma^2_*$)}
\begin{center}
\begin{tabular}{|c|ccc|}
\hline
Output & $y=\omega_1$ & $y=\omega_1+\omega_2$ & $y = \omega_2$ \\
\hline
Critical  & \multirow{2}{*} {$0.43$} & \multirow{2}{*} {$0.37$} & \multirow{2}{*} {$1.6\times 10^{-4}$}\\   
variance &&& \\
open-loop  & $\{ -1.63, -0.8 $  & $\{-1.63, -0.8$ & $\{-1.8, -0.8$ \\
zeros & $ 0.41, -0.18, 0.11\}$ & $0.4, -0.17, 0.12\}$ & $0.55, -10^{-4}\}$ \\
\hline
\end{tabular}
\label{table1}
\end{center}
\end{table}
%
The directions corresponding to the non-minimum phase zeros (unstable zeros) in state space needs more input energy to control and has less output energy to observe. In other words, the non-minimum phase zeros increase the phase lag of the system, and the controller must utilize extra effort to nullify its effect. Further, they increase the overall ${\cal H}_2$ norm for the closed-loop system. Hence, the uncertainty that can be tolerated by a system with non-minimum phase zeros far away from the imaginary axis is very small \cite[Chapters 5,6]{Skogestad_book}.
\begin{figure}[!htb]
    \centering
    \begin{minipage}{.45 \textwidth}
        \centering
        \includegraphics[scale = 1.34]{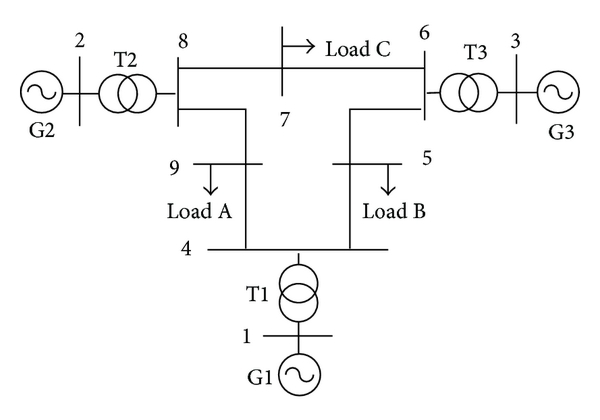}
        \caption{WSCC $9$ bus system}
        \label{fig_9bus}
    \end{minipage}%
    \begin{minipage}{0.45 \textwidth}
        \centering{
        \includegraphics[scale=0.45]{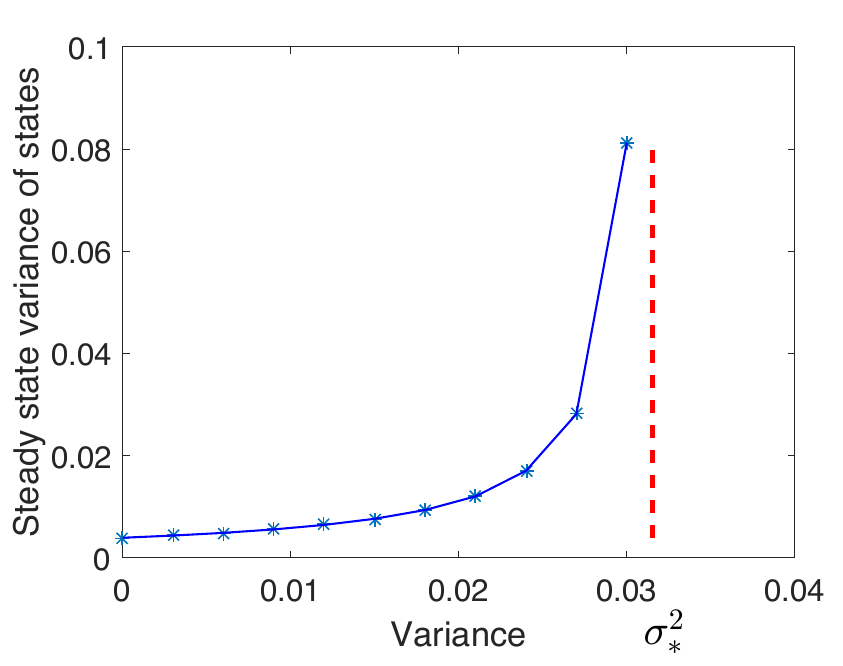}
        \caption{Variation of steady state \\ variance of states}
        \label{fig:traceQ_vs_variance}}
    \end{minipage}
\end{figure}

Next, consider the case where the controller is changing the set-point of all the generators and further it is assumed that uncertainty enters both at the input and output channels with identical variance, $\sigma^2$. The output matrix is considered such that the frequencies ($\omega$) at all the generators are observed. It is verified that the resultant system is stabilizable as well as detectable. With $\theta$ fixed, a dynamic stabilizing feedback controller is designed by solving the LMI-based optimization problem (as given in Theorem \ref{thm_controller_ip_op}) using CVX package in MATLAB. 

Now, applying Theorem \ref{thm_mss_eq_conds}, a dynamic controller is designed such that $\sigma^2_* = 0.031$. To verify the theoretical prediction for the critical value of $\sigma^2_*$, we compute the steady-state covariance for the closed-loop system for varying values of $\sigma^2$. The corresponding plot is shown in Fig. \ref{fig:traceQ_vs_variance} and observe the covariance grows unbounded as the critical value of $\sigma_*^2$ is approached. Further, a non-robust controller based on observer feedback is designed, and the critical variance of the corresponding system is $\sigma_*^2 = 0.0093$. This demonstrates the advantage of the proposed framework.

The next set of simulation results are performed to verify the fundamental limitation results. In doing so, we assume full state feedback, i.e., $C=I$ and single input at generator 3 whose corresponding $B$ matrix is 
$B=    \begin{bmatrix}     0 & 0 & 0 & 0 & 0.0802 & 0 \end{bmatrix}^{\top}.$
The critical value for $\sigma_*$, following results from Theorem \ref{them_fund_lim} is given by
$\sigma_*=\mu\sqrt{{2\sum_i \lambda_i(A)}^{-1}}.$
Assuming the mean value of uncertainty $\mu=1$ and using the pole locations from Eq. (\ref{poles}), we obtain $\sigma_*=0.793$.

\section{Conclusion}
\label{conclusion}
Necessary and sufficient conditions for mean square exponential stability of continuous-time LTI systems with input and output channel uncertainties are derived. The mean square exponential stability results are given in terms of a spectral radius condition, which includes the computation of ${\cal H}_2$ norms of the SISO deterministic systems. Further, we show the mean square exponential stability can be verified by computing the mean square system norm posed as an optimization problem using LMI's. We derive  fundamental limitation results that arise in the mean square exponential stabilization of single input LTI system with input channel uncertainty. These results generalize existing results for discrete-time linear and nonlinear system, where the limitations are expressed in terms of the eigenvalues of open-loop system dynamics. Simulation results involving network power system are presented to demonstrate the application of the developed framework. 
\bibliographystyle{ieeetran}
\bibliography{references}

\end{document}